\setlist[enumerate,1]{label={\upshape(\roman*)}}
\numberwithin{equation}{section}
\theoremstyle{plain}
\newtheorem{theorem}{Theorem}[section]
\newtheorem{lemma}[theorem]{Lemma} 
\theoremstyle{definition}
\newtheorem{remark}[theorem]{Remark}
\newtheorem{example}[theorem]{Example}
\theoremstyle{remark}
\newcommand{\cB}{\mathcal{B}}
\newcommand{\cM}{\mathcal{M}}
\newcommand{\cP}{\mathcal{P}}
\newcommand{\cS}{\mathcal{S}}
\newcommand{\cW}{\mathcal{W}}
\newcommand{\fG}{\mathfrak{G}}
\newcommand{\N}{\mathbb{N}}
\newcommand{\R}{\mathbb{R}}
\newcommand{\C}{\mathbb{C}}
\newcommand{\bA}{\mathbb{A}}
\newcommand{\bB}{\mathbb{B}}
\newcommand{\sF}{\mathscr{F}}
\newcommand{\sG}{\mathscr{G}}
\newcommand{\sP}{\mathscr{P}}
\newcommand{\dist}{\mathrm{dist}}
\renewcommand{\Re}{\operatorname{Re}}
\newcommand{\wh}{\widehat}
\newcommand{\wt}{\widetilde}
\newcommand{\ds}{\displaystyle}
\newcommand{\varep}{\varepsilon}
\newcommand{\ol}{\overline}
\newcommand{\I}{\mathrm{I}}
\renewcommand{\H}{\mathrm{H}}
\newcommand{\Tal}{\mathrm{Tal}}
\newcommand{\BH}{\mathrm{BH}}
\newcommand{\HY}{\mathrm{HY}}
\newcommand{\norm}[1]{\| #1 \|}
\newcommand{\Paren}[1]{\left( #1 \right)}
\newcommand{\brk}[1]{\langle #1 \rangle}
\renewcommand{\d}{\mathrm{d}_{\cM}}
\newcommand{\indi}[1]{\mathds{1}_{#1}}
\begin{document}
\title[Instability results for the LSI]{Instability results for the logarithmic Sobolev inequality and its application to related inequalities}
\author{Daesung Kim}
\address{University of Illinois at Urbana Champaign, 1409 W Green Street, Urbana, Illinois 61801}
\email{daesungk@illinois.edu}
\thanks{\it The author was supported in part by NSF grant \#1403417-DMS; Rodrigo Ba\~nuelos PI}
\subjclass[2010]{28A33, 39B62, 26D10}
\keywords{the logarithmic Sobolev inequality, Talagrand's inequality, the Beckner--Hirschman inequality, the entropic uncertainty principle}
\maketitle
\begin{abstract}
We show that there are no general stability results for the logarithmic Sobolev inequality in terms of the Wasserstein distances and $L^{p}(d\gamma)$ distance for $p>1$.  To this end, we construct a sequence of centered probability measures such that the deficit of the logarithmic Sobolev inequality converges to zero but the relative entropy and the moments do not, which leads to instability for the logarithmic Sobolev inequality. As an application, we prove instability results for Talagrand's transportation inequality and the Beckner--Hirschman inequality. 
\end{abstract}

\section{Introduction}
Let $d\gamma=(2\pi)^{-\frac{n}{2}}e^{-\frac{|x|^{2}}{2}}\,dx$ be the standard Gaussian measure on $\R^{n}$ and $d\mu=fd\gamma$ a probability measure on $\R^n$ where $f$ is a nonnegative function in $L^{1}(d\gamma)$. The Fisher information $\I(\mu)$ and the relative entropy $\H(\mu)$ of $\mu$ with respect to $\gamma$ are defined by
\begin{align*}
	\I(\mu):=\int_{\R^{n}}\frac{|\nabla f|^{2}}{f}\,d\gamma,\qquad
	\H(\mu):=\int_{\R^{n}}f\log f \,d\gamma.
\end{align*}
The classical logarithmic Sobolev inequality (henceforth referred to as the LSI) states that 
\begin{align}\label{eq:LSI}
	\delta(\mu):=\frac{1}{2}\I(\mu)-\H(\mu)\geq 0. 
\end{align}
We call $\delta(\mu)$ the deficit of the LSI. If $d\mu=fd\gamma$, then we simply write $\I(f), \H(f)$, and  $\delta(f)$. Note that the constant $\frac{1}{2}$ is dimension-free and best possible. 

The characterization of equality cases in~\eqref{eq:LSI} was proven by Carlen~\cite{Carlen1991a}. He derived a Minkowski-type inequality and the strict superadditivity for the Fisher information. Combining these with the factorization theorem, he showed that equality holds in~\eqref{eq:LSI} if and only if $f(x)=\exp(b\cdot x -\frac{1}{2}|b|^{2})$ for some $b\in\R^{n}$. Note that the Gaussian measure $\gamma$ is the only centered optimizer.

Carlen also provided an alternative proof for the characterization of equality cases based on the Beckner--Hirschman entropic uncertainty principle, which  was conjectured by Hirschman~\cite{Hirschman1957a} and proven by Beckner~\cite{Beckner1975a}. Indeed, he showed that $\delta(\mu)$ is bounded below by the relative entropy of the Fourier--Wiener transform. Then, equality cases in~\eqref{eq:LSI} follows from the fact that the relative entropy of the Fourier--Wiener transform vanishes if and only if $\mu$ is a Gaussian measure.

After equality cases were fully understood, there has been much effort to find quantitative improvement of the log Sobolev inequality. Carlen~\cite{Carlen1991a} found the lower bound of the deficit in terms of the Fourier--Wiener transform as mentioned above. Otto and Villani~\cite{Otto2000a} exploited the HWI inequality to derive the lower bound of the deficit in terms of the Fisher information and the quadratic Wasserstein distance $W_2$ (see~\eqref{eq:HWI-cons}).

In particular, there has been a great deal of interest in finding quantitative improvement of LSI in terms of functionals that quantify how far a measure is away from the optimizers.  Let $\cM$ be a family of centered probability measures $fd\gamma$ such that $\I(f)$ and $\H(f)$ are well-defined. Let $\d$ be a distance (or a functional that identifies the equality cases) in $\cM$. We say that the LSI is \emph{weakly $\d$--stable} in $\cM$ if $\delta(\mu_{k})\to 0$ and $\mu_{k}\in\cM$ imply $\d(\mu_{k}, \gamma)\to 0$. We say that the LSI is $\d$--\emph{stable} if a modulus of continuity is explicit, that is, there exists a modulus of continuity $\omega$ such that $\delta(\mu)\geq \omega(\d(\mu,\gamma))$ for all $\mu\in\cM$.

The first quantitative LSI in terms of metrics was discovered in~\cite{Indrei2014a}. Indrei and Marcon used the optimal transportation to obtain a lower bound of the deficit of the LSI in terms of the eigenvalues of the Hessian of the optimal transportation potential. Then, they applied Caffarelli's contraction theorem~\cite{Caffarelli2000a} and its generalization due to  Kolesnikov~\cite{Kolesnikov2013a}, which leads to $W_2$--stability for the LSI. We note that the potential is a solution to the Monge--Ampere equation under some regularity assumptions on the densities, and that these results of Caffarelli and Kolesnikov can be thought of as Sobolev type estimates of the equation. 

A strict improvement of the LSI for the class of probability measures satisfying a $(2,2)$-Poincar\'e inequality was proved in~\cite{Fathi2016a}, which yields stability bounds with respect to $W_{2}$ and $L^{1}(d\gamma)$. Using the scaling asymmetry of the Fisher information and the relative entropy, it was shown in~\cite{Bobkov2014a} (see also~\cite{Dolbeault2016a}*{Theorem 1} and~\cite{Bolley2018a}) that the LSI is $W_{2}$--stable in the space of probability measures whose second moments are bounded by the second moment of the standard Gaussian measure (which is the same as the dimension of the underlying space). In~\cite{Feo2017a}*{Proposition 4.7}, the authors proved $L^2$--stability (and so  $L^1$--stability) in the space of probability measures satisfying a positivity condition on the Fourier transform. Recently, Indrei and the author in~\cite{Indrei2018a} proved $W_{1}$--stability as well as $L^{1}$--stability (only in the one dimension case) in the space of probability measures with bounded second moments, where $W_{1}$ is the Kantorovich--Rubinstein distance. 
In~\cite{LNP}, the authors investigated the distance functionals induced by the Stein characterization, and proved stability results for the LSI in terms of these functionals using the Ornstein--Uhlenbeck semigroup. Recently, Gozlan~\cite{Gozlan} showed that a certain form of stability estimates of the LSI is equivalent to the Mahler conjecture, which states that the product of the volumes of a convex body and its polar body is minimized when the convex body is a hypercube.
 
Given such effort to find stability for the LSI in terms of different assumptions and distance functionals, a natural question is to determine the best possible conditions on probability measure and distances for stability for the LSI. The goal of the paper is to investigate conditions under which stability for the LSI fails. To this end, we construct sequences of probability measures such that the deficit of the LSI converges to 0 but the relative entropy does not. It turns out that our examples yield several instability results for the LSI in terms of the Wasserstein distances and $L^p(d\gamma)$ distances. The results imply that some of the existing stability estimates cannot be improved in terms of spaces of probability measures or distances. Moreover, we apply our examples to Talagrand's transportation inequality and the Beckner--Hirschman inequality to obtain instability results.

\subsection{The log Sobolev inequality}
For a probability measure $\mu$ on $\R^{n}$ and $p\geq 1$, the $p$-th moment of $\mu$ is defined by 
\begin{align*}
	m_{p}(\mu)=\int_{\R^{n}}|x|^{p}\,d\mu.
\end{align*}
The space of probability measures on $\R^{n}$ with finite $p$-th moments is denoted by $\cP_{p}(\R^{n})$. The Wasserstein distance of order $p$ between two probability measures $\mu,\nu\in \cP_{p}(\R^{n})$ is defined by
\begin{align*}
	W_{p}(\mu,\nu)=\inf_{\pi} \left(\iint |x-y|^{p}\,d\pi(x,y)\right)^{\frac{1}{p}},
\end{align*}
where the infimum is taken over all probability measures $\pi$ on $\R^{n}\times \R^{n}$ with marginals $\mu$ and $\nu$. In particular, $W_{1}$ is called the Kantorovich--Rubinstein distance and $W_{2}$ is called the quadratic Wasserstein distance. 

Let $M>0$ and $\cP_2^M(\R^n)$ be the space of probability measures $\mu$ on $\R^n$ with $m_2(\mu)\leq M$. Note that the standard Gaussian measure $d\gamma$ belongs to $\cP_2^M(\R^n)$ for $M>n$ and is the unique optimizer of the log Sobolev inequality in $\cP_2^n(\R^n)$. Note also that the space $\cP_2^M(\R^n)$ for $M>n$ has other optimizers of the form $e^{b\cdot x -|b|^2/2}d\gamma$ for some $b\in\R^n$. Note that the standard Euclidean logarithmic Sobolev inequality, which is equivalent to~\eqref{eq:LSI}, is not invariant under scaling. Optimizing in the scaling parameter, $W_2$--stability was derived in~\cite{Dolbeault2016a}*{Theorem 1} (see also~\cite{Bobkov2014a}), which states that if a probability measure $\mu$ on $\R^n$ is centered and its second moment is bounded by $n$ (that is, $\mu\in\cP_2^n(\R^n)$), then
\begin{align*}
    \delta(\mu)\geq C_n W_2^4(\mu,\gamma).
\end{align*}
A natural questions is whether the same stability holds without the moment assumption. Our first main result shows that the stability in terms of $W_2$ and $L^p$ ($p>1$) does not hold for centered probability measures whose second moments are bounded by $M$ for $M>n$. The result also implies that the $L^{1}$--stability estimate in~\cite{Indrei2018a}*{Theorem 1.1} for $n=1$ cannot be improved in terms of the $L^{p}$ distances.

\begin{theorem}\label{thm:ins-lsi-w2}
Let $M>n$ and $p>1$. There exists a sequence of centered probability measures $d\mu_{k}=f_{k}d\gamma$ in $\cP_{2}^{M}(\R^n)$ such that $\lim_{k\to\infty}\delta(\mu_{k})=0$, 
\begin{align*}
    \lim_{k\to\infty}W_{2}(\mu_{k},\gamma)=c>0,
\end{align*} 
and 
\begin{align*}
    \liminf_{k\to\infty}\|f_{k}-1\|_{L^{p}(d\gamma)}>0.
\end{align*}
\end{theorem}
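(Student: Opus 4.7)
The plan is to deduce Theorem~\ref{thm:ins-lsi-w2} directly from Theorem~\ref{thm:example} by choosing the parameters $(s,t)$ judiciously and then converting the moment information of Theorem~\ref{thm:example}(iv) into an $L^{p}(d\gamma)$ lower bound via H\"older's inequality.

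First, set $t=2$ and pick any $s$ with $0<s<(M-n)/2$, which is possible precisely because $M>n$. Theorem~\ref{thm:example}(i) then gives $\delta(\mu_k)=sk^{-2}\log k+o(k^{-2}\log k)\to 0$. Theorem~\ref{thm:example}(iii) yields $W_{2}^{2}(\mu_k,\gamma)=2s+O(k^{-1}(\log k)^{1/2})$, so $W_2(\mu_k,\gamma)\to\sqrt{2s}=:c>0$. Theorem~\ref{thm:example}(iv) gives $m_2(\mu_k)=n+2s+O(k^{-2})$, and since $n+2s<M$ we have $m_2(\mu_k)\le M$ for every sufficiently large $k$; after discarding the initial terms of the sequence, we may assume $\mu_k\in\cP_{2}^{M}(\R^n)$ for all $k$.

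For the $L^p$ lower bound, the idea is to test $f_k-1$ against a fixed Gaussian-integrable weight and apply H\"older's inequality. The natural choice is $|x|^{2}$, because its pairing with $f_k-1$ equals $m_2(\mu_k)-n$, which by the previous step converges to $2s>0$. For $p>1$ the conjugate exponent $p'=p/(p-1)$ is finite, so $\||x|^{2}\|_{L^{p'}(d\gamma)}<\infty$. H\"older's inequality gives
\[
m_{2}(\mu_k)-n=\int_{\R^{n}}|x|^{2}(f_k-1)\,d\gamma\le\|f_k-1\|_{L^{p}(d\gamma)}\,\||x|^{2}\|_{L^{p'}(d\gamma)},
\]
and taking the liminf yields $\liminf_{k\to\infty}\|f_k-1\|_{L^{p}(d\gamma)}\ge 2s/\||x|^{2}\|_{L^{p'}(d\gamma)}>0$, which completes the argument.

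I do not foresee a significant obstacle, as Theorem~\ref{thm:example} encapsulates all the delicate analytic construction. The main insight behind the parameter choice is that $t=2$ is essentially forced by simultaneously requiring $\delta(\mu_k)\to 0$ and $W_2(\mu_k,\gamma)$ bounded away from zero; and the use of $|x|^{2}$ as a test function turns the second-moment gap, which is the only easily accessible non-vanishing quantity from Theorem~\ref{thm:example}, into the desired $L^{p}$-distance lower bound for every $p>1$.
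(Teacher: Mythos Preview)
Your proof is correct and follows essentially the same outline as the paper: choose $t=2$ with $s$ small enough to land in $\cP_2^M(\R^n)$, read off $\delta(\mu_k)\to0$ and $W_2(\mu_k,\gamma)\to c>0$ from Theorem~\ref{thm:example}, and then derive the $L^p$ lower bound. The paper uses $s=(M-n)/4$, but your more general choice $0<s<(M-n)/2$ works equally well.

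The one genuine difference is in the $L^p$ step. The paper bounds the relative entropy from above via the pointwise inequality $z\log z\le \frac{2}{p-1}|z-1|^{p}+2|z-1|$ together with H\"older, obtaining
\[
\H(\mu_k)\le \frac{2}{p-1}\|f_k-1\|_{L^p(d\gamma)}^{p}+2\|f_k-1\|_{L^p(d\gamma)},
\]
and then uses Theorem~\ref{thm:example}(ii), namely $\H(\mu_k)\to s>0$, to conclude. You instead test $f_k-1$ against $|x|^2$ and use Theorem~\ref{thm:example}(iv), i.e.\ $m_2(\mu_k)-n\to 2s>0$. Your route is more elementary, requiring only H\"older and Gaussian moment finiteness, and avoids the auxiliary $z\log z$ estimate; the paper's route has the mild advantage of working in any setting where only the entropy asymptotics (not the second moment) are available, but here both are given by Theorem~\ref{thm:example}, so the two arguments are equivalent in strength.
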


Let $p>2$. By Jensen's inequality, we have $W_2(\mu,\gamma)\leq W_p(\mu,\gamma)$. Thus, it follows from Theorem~\ref{thm:ins-lsi-w2} that there is no $W_p$ stability in $\cP_2^M(\R^n)$ when $M>n$.

We note that the $L^p$-distance of probability measures can be understood as a $f$-divergence functional where $f(t)=|t-1|^p$, and the $L^2$ distance is in particular called the Pearson $\chi^2$ divergence. We also notice here that the LSI is stable in terms of the $L^p$ distance for $p > 1$ under some integrability assumptions (see~\cite{Indrei2018a}*{Corollary 1.2}).

The proofs of Theorem~\ref{thm:ins-lsi-w2} and the following results are based on the example in Lemma~\ref{lem:example}. The motivation of the proof is to consider the weighted sum of the optimizers $e^{b\cdot x - |b|^2/2}$ for the LSI. In order to facilitate to control the relevant quantities with explicit orders, we cut the overlaps of the densities of the optimizers and connect them to get a $C^\infty$ density.

Note that our example does not give an instability result for $L^1(d\gamma)$ distance. Indeed, one can see that if $d\mu_k=f_kd\gamma$ is a sequence of probability measures constructed in Lemma~\ref{lem:example}, then $\|f_k-1\|_{L^1(d\gamma)}\to 0$ as $k\to\infty$. 

\begin{remark}[Sharp exponent in $L^1$--stability]\label{rmk:lsi-sharp-L1} 
The $L^1$--stability estimate in~\cite{Indrei2018a}*{Theorem 1.1} states that if $d\mu=fd\gamma\in\cP_2^M(\R)$ is centered, then $\delta(f)\geq \|f-1\|_{L^1(d\gamma)}^4$. The higher dimensional stability estimates in terms of $L^1$ can also be found in~\cite{Indrei2018a}*{Corollary 1.4, Remark 1.5} under additional assumptions on the probability measures. It is open to determine the sharp exponent in $L^1$--stability. We note that for $\alpha<1$, the example in Lemma~\ref{lem:example} satisfies
\begin{align*}
	\lim_{k\to\infty}\frac{\delta(f_k)}{\|f_k-1\|_{L^1(d\gamma)}^\alpha}=0.
\end{align*}
Thus, it is expected that the sharp exponent will be between 1 and 4. For higher dimensions, weak $L^1$--stability in $\cP_2^M$ without any additional assumptions was proven in~\cite{Indrei2018a}*{Theorem 1.22} but the modulus of continuity is not known yet.  
\end{remark}

It was shown in~\cite{Indrei2018a} that if $\mu$ is a centered probability measure with bounded second moment (that is, $\mu\in \cP_2^M(\R^n)$), then there exists a constant $C_{n,M}>0$ such that 
\begin{align}\label{eq:W1stab-lsi}
	\delta(\mu)\geq C_{n,M}\min\{W_{1}(\mu,\gamma),W_{1}^4(\mu,\gamma)\}.
\end{align}
The next result shows that the stability in terms of $W_p$ distance for $p\ge 1$ does not holds for centered probability measures with finite second moments. As a consequence, we conclude that the stability estimate~\eqref{eq:W1stab-lsi} in terms of $W_1$ distance is sharp in terms of $\cP_2^M(\R^n)$.

\begin{theorem}\label{thm:ins-lsi-w1}
Let $p\ge 1$, then there exists a sequence of centered probability measures $d\mu_{k}=f_{k}d\gamma$ in $\cP_{2}(\R^n)$ such that $\lim_{k\to\infty}\delta(\mu_{k})=0$ and $\lim_{k\to\infty}W_{p}(\mu_{k},\gamma)=\infty$.
\end{theorem}

\begin{remark}[Sharp exponent in $W_p$--stability for $p\in[1,2)$]\label{rmk:lsi-sharp-w1}
A natural question is to find the sharp exponent in~\eqref{eq:W1stab-lsi}. Let $p\in[1,2)$, $\alpha<\frac{2p}{2-p}$, and $M>n$. 
By Lemma~\ref{lem:example} with the appropriate choice of parameters ($s=(M-n)/4$ and $t=2$, see the statement of the lemma below), one can show that there exists a  sequence of centered probability measures $\mu_k$ such that $\mu_k\in \cP_2^M(\R^n)$ for large $k$, $\delta(\mu_k)\to 0$, $W_p(\mu_k,\gamma)\to 0$, and 
\begin{align*}%\label{eq:sharpW1}
	\lim_{k\to\infty}\frac{\delta(\mu_k)}{W_{p}^\alpha (\mu_k,\gamma)}=0.
\end{align*}
On the other hand, the construction of Lemma~\ref{lem:example} does not give such an example if $\alpha=2$ and $p=1$. Thus, it is expected that the sharp exponent in~\eqref{eq:W1stab-lsi} would be 2, which is an open problem. For $1<p<2$, $W_p$--stability in $\cP_2^M(\R^n)$ is not known yet. It is expected that the sharp exponent would be $\frac{2p}{2-p}$.
\end{remark}

Our instability results for the LSI allow us to compare different probability measure spaces where stability for the LSI holds. The following two remarks show that the space $\cP_2^M(\R^n)$ is different from the spaces considered in existing stability results in~\cite{Feo2017a, Indrei2018a}.

\begin{remark}
Let $\cS$ be the space of probability measures $fd\gamma$ satisfying $$\sF(e^{-\pi |x|^{2}}\sqrt{f(2\sqrt{\pi} x)})\geq 0,$$ where $\sF(\cdot)$ denotes the Fourier transform. It was shown in~\cite{Feo2017a}*{Proposition 4.7} that if $fd\gamma \in\cS$ then 
\begin{align}\label{eq:L1stab_Feo17}
	\delta(f)
	\geq \frac{1}{32}\|f-1\|_{2}^{8}.
\end{align}
We claim that $\cS\not\subset \cP_{2}^{M}(\R^{n})$ and $\cP_{2}^{M}(\R^{n})\not\subset \cS$ for any $M>0$. 
Suppose $\cP_2^M\subset \cS$. By Theorem~\ref{thm:ins-lsi-w2}, there exists a sequence of probability measures $f_k d\gamma\in\cP_2^M\subset \cS$ such that 
\begin{align*}
    \liminf_{k\to\infty}\frac{\delta(f_k)}{\|f_k-1\|_2^8}=0.
\end{align*}
In particular, one has $\delta(f_k)\le \frac{1}{64}\|f_k-1\|_2^8$ for large $k$, which contradicts to~\eqref{eq:L1stab_Feo17}. Thus, we have $\cP_{2}^{M}\not\subset \cS$ for all $M>0$. Let $f_{k}d\gamma$ be the centered Gaussian with variance $k$, then $\{f_{k}d\gamma\}$ is not included in $\cP_{2}^{M}$ for any $M>0$. Since $e^{-\pi |x|^{2}}\sqrt{f(2\pi x)}$ is also Gaussian, its Fourier transform is positive, which implies $\cS\not\subset \cP_{2}^{M}$.
\end{remark}

\begin{remark}
For $\alpha>0$ and $g\in L^{1}(d\gamma)$, we define $\cB(\alpha,g)=\{fd\gamma\in\cP: \alpha\leq f\leq g\}$.  In~\cite{Indrei2018a}*{Theorem 1.6}, the weak $L^{1}$--stability was proven in $\cB(\alpha,g)$: if $\{f_{k}d\gamma\}\subset\cB(\alpha,g)$ and $\delta(f_k)\to 0$ as $k\to\infty$ for some $\alpha>0$ and $g\in L^{1}(d\gamma)$, then $f_{k}\to 1$ in $L^{1}(d\gamma)$. For any $M,\alpha>0$ and $g\in L^{1}(d\gamma)$, we claim that $\cB(\alpha,g)\not\subset \cP_{2}^{M}(\R^{n})$ and $\cP_{2}^{M}(\R^{n})\not\subset\cB(\alpha,g)$. It suffices to consider the case $n=1$. Let $M>0$ be fixed and $f_{k}d\gamma$ be a sequence of probability measures constructed as in Lemma~\ref{lem:example} with $t=2$, and choose $s$ so that $\{f_{k}d\gamma\}\subset\cP_{2}^{M}$. Since the minimum of $f_{k}$ converges to 0, we get $\cP_{2}^{M}\not\subset\cB(\alpha,g)$. We define a sequence of functions $f_{k}$ such that $f_k(x)=f_k(-x)$ and
\begin{align*}
	f_k(x)=
	\begin{cases}
		\frac{\ds e^{\frac{x^{2}}{2}}}{\ds C_k \pi (x^{2}+1)}, & x\in[0,k],\\
		\frac{\ds e^{\frac{k^{2}}{2}}}{\ds C_k \pi (k^{2}+1)}, & x\in (k,\infty),
	\end{cases}
\end{align*}
where $C_k$ is the normalization constant so that $f_k d\gamma$ is a probability measure. Indeed one can compute $C_k$ as
\begin{align*}
	C_k = \frac{2}{\pi}\Big(\arctan(k)+\frac{\ds e^{\frac{k^{2}}{2}}(1-\Phi(k))}{k^2 +1} \Big).
\end{align*}
where $\Phi(k)=\int_\infty^k d\gamma$. Note that $C_k\to 1$ as $k\to \infty$. Furthermore, there exist $C, \alpha>0$ such that $f_{k}\geq \alpha$ for all $k$ and 
\begin{align*}
	f_{k}(x)\leq \frac{C e^{\frac{x^{2}}{2}}}{\ds  \pi (x^{2}+1)}\in L^{1}(d\gamma)
\end{align*}
for all $x$ and $k$. Since the second moment of $f_{k}d\gamma$ diverges, we conclude that $\cB(\alpha,g)\not\subset \cP_{2}^{M}(\R^{n})$.
\end{remark}

\subsection{Talagrand's transportation inequality}
Talagrand~\cite{Talagrand1996a} proved that the relative entropy is bounded below by the quadratic Wasserstein distance, that is,
\begin{align}\label{eq:Tal}
	\delta_{\Tal}(\mu):=2\H(\mu)-W_{2}^{2}(\mu,\gamma)\geq 0,
\end{align}
where $\delta_{\Tal}(\mu)$ is called the deficit of Talagrand's inequality. This inequality has a close relation to the LSI. Both the inequalities for the Gaussian measure are dimension independent, have the tensorization property, and imply the concentration phenomenon. Otto and Villani~\cite{Otto2000a} showed that a measure satisfying a log Sobolev inequality also satisfies a Talagrand-type inequality, and the converse holds under a curvature condition. From the HWI inequality
\begin{align*}
	W_{2}(\mu,\gamma)\sqrt{\I(\mu)}-\frac{1}{2}W_{2}^{2}(\mu,\gamma)
	\geq \H(\mu),
\end{align*}
one can see that the deficit of Talagrand's inequality is bounded by that of the LSI in the following sense
\begin{align}
	\delta(\mu)
    &\geq \frac{1}{2}\left(\sqrt{\I(\mu)}-W_2(\mu,\gamma)\right)^2
    \geq \frac{1}{2}\left(\sqrt{2\H(\mu)}-W_2(\mu,\gamma)\right)^2\nonumber\\
    &= \frac{\delta_{\Tal}(\mu)^2}{2\left(\sqrt{2\H(\mu)}+W_2(\mu,\gamma)\right)^2}
    \geq \frac{\delta_{\Tal}(\mu)^2}{16\H(\mu)}.\label{eq:HWI-cons}
\end{align}
In the last inequality, we used the Talagrand's transport inequality~\eqref{eq:Tal}. In particular, if $\delta(\mu_k)\to 0$ and $\H(\mu_k)\to c$ for some constant $c$, then $\delta_{\Tal}(\mu_k)\to 0$. This observation leads to the following $W_2$--instability result for Talagrand's inequality.

\begin{theorem}\label{thm:ins-ti-w2}
Let $M>n$, then there exists a sequence of centered probability measures $d\mu_{k}=f_{k}d\gamma$ in $\cP_{2}^{M}(\R^n)$ such that $\lim_{k\to\infty}\delta_{\Tal}(\mu_{k})=0$ and
\begin{align*}
    \lim_{k\to\infty}W_{2}(\mu_{k},\gamma)=c>0.
\end{align*} 
\end{theorem}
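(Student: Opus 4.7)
The plan is to read off the result almost directly from Theorem \ref{thm:example} combined with the HWI-derived chain of inequalities displayed in \eqref{eq:HWI-cons}. The key observation is that the third bound in \eqref{eq:HWI-cons}, namely
\[
\delta_{\Tal}(\mu)^{2} \leq 16\,\H(\mu)\,\delta(\mu),
\]
converts an instability example for the LSI into one for Talagrand's inequality, provided we can arrange $\H(\mu_k)$ to stay bounded while $\delta(\mu_k)\to 0$ and $W_2(\mu_k,\gamma)$ remains bounded away from $0$.

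To achieve this I would apply Theorem \ref{thm:example} with the critical exponent $t=2$ and with $s>0$ chosen so that $n+2s<M$, for instance $s=(M-n)/4$. Plugging $t=2$ into the five asymptotics of Theorem \ref{thm:example} gives
\begin{align*}
\delta(\mu_k) &= s\,k^{-2}\log k + o(k^{-2}\log k)\to 0,\\
\H(\mu_k) &= s - s\,k^{-2}\log k + o(k^{-2}\log k) \to s,\\
W_2^2(\mu_k,\gamma) &= 2s + O(k^{-1}(\log k)^{1/2}) \to 2s,\\
m_2(\mu_k) &= n + 2s + \tfrac{s n}{4}k^{-2} + o(k^{-2}).
\end{align*}
The last line shows $m_2(\mu_k)\to n+2s<M$, so $\mu_k\in\cP_2^M(\R^n)$ for all sufficiently large $k$; after discarding finitely many terms the whole sequence lies in $\cP_2^M(\R^n)$.

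Then I would invoke \eqref{eq:HWI-cons} to conclude
\[
0 \leq \delta_{\Tal}(\mu_k)^2 \leq 16\,\H(\mu_k)\,\delta(\mu_k) \to 16\,s\cdot 0 = 0,
\]
hence $\delta_{\Tal}(\mu_k)\to 0$, while $W_2(\mu_k,\gamma)\to \sqrt{2s}=:c>0$. This finishes the proof. There is essentially no obstacle here: all the hard work is in Theorem \ref{thm:example}, and the only subtlety is the scaling calculation that selects $t=2$ as the unique exponent making $\H(\mu_k)$ and $W_2(\mu_k,\gamma)$ simultaneously bounded and nonvanishing; the choice $s<(M-n)/2$ then ensures the second-moment constraint is met, which is why the hypothesis $M>n$ is sharp for this construction.
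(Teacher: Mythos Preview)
Your proof is correct and follows essentially the same approach as the paper: choose $t=2$ and $s=(M-n)/4$ in Theorem~\ref{thm:example}, read off that $\delta(\mu_k)\to 0$, $\H(\mu_k)\to s$, $W_2^2(\mu_k,\gamma)\to 2s$, and $m_2(\mu_k)\to n+2s<M$, and then use the bound $\delta_{\Tal}(\mu_k)^2\le 16\H(\mu_k)\delta(\mu_k)$ from \eqref{eq:HWI-cons} to conclude. The paper's proof is the same argument in slightly more compressed form.
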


We note that an improvement of Talagrand's inequality was shown in~\cite{Mikulincer2019a}. In particular, if $\mu\in\cP_2^n(\R^n)$ then the deficit of Talagrand's inequality is bounded below by the relative entropy, which implies $W_2$--stability. It was also shown that the condition $\cP_2^n(\R^n)$ is sharp by giving an example. In one dimension, Barthe and Kolesnikov~\cite{Barthe2008a} showed that the deficit of Talagrand's inequality is bounded below by the optimal transportation cost with cost function $\varphi(z)=z-\log (1+z)$. This leads to $W_1$--stability for Talagrand's transportation inequality.  In~\cite{Fathi2016a}, the authors generalized the stability estimate to higher dimensions. In fact, they showed the $W_{1,1}$--stability bound, where $W_{1,1}$ is the $L^1$--Wasserstein distance with $\ell^1$ cost function on $\R^n$. Cordero-Erausquin~\cite{Cordero-Erausquin2017a}*{Theorem 1.3} improved the result by replacing $n^{-\frac{1}{2}}W_{1,1}$ with $W_1$. That is, it was shown that if $\mu\in \cP_2(\R^n)$, then 
\begin{align}\label{eq:Cor-Tal-W1}
	\delta_{\Tal}(\mu)\geq C\min\{W_1(\mu,\gamma),W_1^2(\mu,\gamma)\}.
\end{align} 
The next result shows that the result of~\cite{Cordero-Erausquin2017a} cannot be improved in terms of the $W_p$ distances. 

\begin{theorem}\label{thm:ins-ti-w1}
Let $p>1$, then there exists a sequence of centered probability measures $d\mu_{k}=f_{k}d\gamma$ in $\cP_{2}(\R^n)$ such that $\lim_{k\to\infty}\delta_{\Tal}(\mu_{k})=0$ and $\lim_{k\to\infty}W_{p}(\mu_{k},\gamma)=\infty$.
\end{theorem}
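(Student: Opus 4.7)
The plan is to feed the sequence produced by Theorem \ref{thm:example} into the HWI chain \eqref{eq:HWI-cons} to control the Talagrand deficit, and into the lower bound in item (v) to force $W_p$ to blow up. Given $p>1$, pick any $t\in(1,p)$ and any $s>0$, and let $\{\mu_k\}$ be the associated centered sequence supplied by Theorem \ref{thm:example}. Item (iv) gives $m_2(\mu_k)<\infty$, so $\mu_k\in\cP_2(\R^n)$.

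For $\delta_{\Tal}(\mu_k)\to 0$, I would rearrange the last inequality in \eqref{eq:HWI-cons} as
\[
\delta_{\Tal}(\mu_k)^2 \leq 16\,\H(\mu_k)\,\delta(\mu_k).
\]
Substituting the asymptotics from items (i) and (ii) then yields
\[
\H(\mu_k)\,\delta(\mu_k) = \frac{s^2 t}{2}\,k^{2-2t}\log k + o(k^{2-2t}\log k),
\]
which tends to $0$ since $t>1$. Hence $\delta_{\Tal}(\mu_k)\to 0$.

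For $W_p(\mu_k,\gamma)\to\infty$, I would invoke the reverse triangle inequality in $L^p$: for any coupling $\pi$ of $\mu_k$ and $\gamma$ with $(X,Y)\sim\pi$, we have $\|X\|_{L^p(\pi)}\leq \|X-Y\|_{L^p(\pi)}+\|Y\|_{L^p(\pi)}$, and taking the infimum over $\pi$ yields
\[
W_p(\mu_k,\gamma)\geq m_p(\mu_k)^{1/p} - m_p(\gamma)^{1/p}.
\]
The lower bound in item (v) gives $m_p(\mu_k)\geq m_p(\gamma) + s k^{p-t} + o(k^{p-t})$, and since $p-t>0$, this forces $m_p(\mu_k)^{1/p}\to\infty$, whence $W_p(\mu_k,\gamma)\to\infty$.

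The argument is essentially mechanical once Theorem \ref{thm:example} is in hand; the only balancing act is the choice of $t$. The lower constraint $t>1$ is what makes $\H\cdot\delta\sim k^{2-2t}\log k\to 0$, while the upper constraint $t<p$ is what makes $k^{p-t}\to\infty$ in the moment estimate. The window $(1,p)$ is nonempty precisely when $p>1$, matching the hypothesis. I do not foresee a serious obstacle beyond the routine verification that the $o$-remainders in (i), (ii), (v) do not contaminate the leading order asymptotics.
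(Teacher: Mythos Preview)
Your proof is correct and follows essentially the same approach as the paper: both feed the sequence from Theorem~\ref{thm:example} into the HWI bound \eqref{eq:HWI-cons} to control $\delta_{\Tal}$, and both use a moment-to-$W_p$ comparison to force $W_p\to\infty$. The paper simply fixes the particular choice $t=\tfrac{p+1}{2}\in(1,p)$ rather than leaving $t$ free in that interval, and bounds $W_p^p$ via $|x|^p\le 2^{p-1}(|x-y|^p+|y|^p)$ instead of the reverse triangle inequality in $L^p$; these are cosmetic differences only.
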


\begin{remark}[Sharp exponent in $W_1$--stability for Talagrand's inequality]\label{rmk:ti-sharp-w1}
Let $\alpha<1$. By Lemma~\ref{lem:example}, it is easy to see that there exists a sequence of probability measures $\mu_k$ in $\cP_2(\R^n)$ such that $\delta_\Tal(\mu_k)\to 0$, $W_1(\mu_k,\gamma)\to 0$, and
\begin{align*}
	\frac{\delta_\Tal(\mu_k)}{W_1^\alpha(\mu_k,\gamma)}\to0
\end{align*}
as $k\to\infty$. This observation implies that the exponent of $W_1$ in~\eqref{eq:Cor-Tal-W1} cannot be replaced by any smaller number than $1$. It is natural to expect that the sharp exponent would be 1. Note that if one shows~\eqref{eq:Cor-Tal-W1} with the exponent 1, then $W_1$--stability for the LSI with the sharp exponent 2 can be obtained by the proof of~\cite{Indrei2018a}, as expected in Remark~\ref{rmk:lsi-sharp-w1}. 
\end{remark}

\begin{remark}
Suppose $\mu_k$ is the sequence of probability measures constructed in Lemma~\ref{lem:example} with $t\in(0,1)$. It follows from Lemma~\ref{lem:example} and~\eqref{eq:Cor-Tal-W1} that $\delta(\mu_k)\to 0$, $W_1(\mu_k,\gamma)\to \infty$, $\delta_{\Tal}(\mu_k)\to\infty$, and
\begin{align*}
	\frac{W_1^2(\mu_k,\gamma)}{H(\mu_k)}\to 0
\end{align*}
as $k\to\infty$. This observation implies that the relative entropy term in the lower bound of~\eqref{eq:HWI-cons} is necessary.
\end{remark}

\subsection{The Beckner--Hirschman inequality}
We prove that there are no stability estimates for the Beckner--Hirschman inequality (the BHI for short) in terms of $L^{p}$ distances with specific measures and range of $p$. In this subsection, we restrict to the case $n=1$. The Shannon entropy of a nonnegative function $h$ on $\R$  with $\|h\|_{2}=1$ is given by
\begin{align*}
	S(h)=-\int_{\R}h\log h\,dx.
\end{align*}
The Beckner--Hirschman inequality states that
\begin{align*}
	\delta_{\BH}(h)	:= S(|h|^{2})+S(|\wh{h}|^{2})-(1-\log 2)\geq 0
\end{align*}
for a nonnegative function $h$ with $\|h\|_2=1$, where $\wh{h}$ is the Fourier transform defined by $\wh{h}(\xi)=\int_{\R}e^{-2\pi i x\cdot\xi}h(x)\, dx$. We call $\delta_{\BH}(h)$ the deficit of the BHI. The inequality is also called the entropic uncertainty principle. We say that a function $h$ is an optimizer for the BHI if $\delta_{\BH}(h)=0$. Let $\fG$ be the set of all nonnegative, $L^{2}$--normalized optimizers for the BHI. Using the fact that the optimizers are Gaussian (see~\cite{Lieb1990a} and~\cite{Carlen1991a}*{p.207}), we get 
\begin{align}\label{eq:BH_G_{ab}}
	\fG=\left\{G_{a,r}(x)=\Big(\frac{2a}{\pi}\Big)^{\frac{1}{4}}e^{-a(x-r)^{2}}:a>0, r\in\R\right\}.
\end{align}
We denote by $G_{a}(x):=G_{a,0}(x)$ and $g(x):=G_{\pi}(x)$. For a measure $\mu$ on $\R$ and $p>0$, we define 
\begin{align*}
	\dist_{L^{p}(d\mu)}(h,\fG)
	=\inf_{u\in\fG}\|h-u\|_{L^{p}(d\mu)}
	=\inf_{a>0, r\in\R}\|h-G_{a,r}\|_{L^{p}(d\mu)}.
\end{align*}
It was shown in~\cite{Carlen1991a} that the deficit of the LSI is bounded below by that of the BHI. To be specific, we have
\begin{align*}
	\delta_{\BH}(h)=\delta(f)-\int |\cW f|^2\log|\cW f|^2\,d\gamma \leq \delta(f),
\end{align*}
where $\cW f$ is the Fourier--Wiener transform of $f$, defined by $\cW f=\frac{1}{g}(\wh{fg})$, and 
\begin{align*}%\label{eq:htrans}
	h(x)=(f(2\sqrt{\pi}x))^{\frac{1}{2}}g(x).	
\end{align*}
We are ready to state our instability results for the BHI.

\begin{theorem}\label{thm:ins-bhi-pw}
Let $\lambda>0$, $d\eta_{\lambda}=|x|^{\lambda}dx$, and $p\geq 2(\lambda+1)$, then there exists a sequence of nonnegative functions $\{h_{k}\}_{k\geq 1}$ in $L^{p}(d\eta_{\lambda})$ such that $\norm{h_{k}}_{2}=1$, $\delta_{\BH}(h_{k})\to 0$, $\|h_{k}\|_{L^{p}(d\eta_{\lambda})}\to \infty$, and
\begin{align*}
	\liminf_{k\to\infty}\frac{\dist_{L^{p}(d\eta_{\lambda})}(h_{k},\fG)}{\norm{h_{k}}_{L^{p}(d\eta_{\lambda})}}\geq C(p,\lambda)>0.
\end{align*}
\end{theorem}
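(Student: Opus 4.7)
The plan is to transport the instability from Theorem~\ref{thm:example} to the BHI side via the Carlen change of variables \eqref{eq:htrans}. With $f_k$ the densities from Theorem~\ref{thm:example} and $h_k(x) = g(x)\sqrt{f_k(2\sqrt{\pi}x)}$, the substitution $y = 2\sqrt{\pi}x$ together with $g(x)^2 = \sqrt{2}\,e^{-2\pi x^2}$ give $\|h_k\|_2^2 = \int f_k\,d\gamma = 1$, and the Carlen inequality $\delta_{\BH}(h_k) \leq \delta(f_k)$ recalled in the introduction combined with Theorem~\ref{thm:example}(i) yields $\delta_{\BH}(h_k) \to 0$. Thus only the two size statements remain.

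The same substitution turns the weighted norm into
\[
	\|h_k\|_{L^p(d\eta_\lambda)}^p = \frac{2^{p/4}}{(2\sqrt{\pi})^{\lambda+1}}\int_\R e^{-py^2/4}\, f_k(y)^{p/2}\,|y|^\lambda\,dy,
\]
and each $G_{a,r}\in \fG$ corresponds under the Carlen map to a Gaussian density $\tilde f_{a,r}$ with respect to $\gamma$, producing an analogous expression for $\|G_{a,r}\|_{L^p(d\eta_\lambda)}$. The first task is to pin down the parameters $s,t$ in Theorem~\ref{thm:example} and the bump locations $y_k\to\infty$ so that the bump contribution to the integral above dominates and diverges. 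The hypothesis $p\geq 2(\lambda+1)$ enters precisely here: the exponent $p/2$ on the bump height must be large enough relative to $\lambda$ to beat the Gaussian damping $e^{-py_k^2/4}$ against the polynomial gain $|y_k|^\lambda$, and $2(\lambda+1)$ is exactly the borderline for this competition.

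For the uniform lower bound on the normalized distance, I would localize the bump of $h_k$ to a small window $B_k$ around $y_k/(2\sqrt{\pi})$ and use
\[
	\|h_k - G_{a,r}\|_{L^p(d\eta_\lambda)} \geq \bigl\|h_k\bigr\|_{L^p(B_k,\,d\eta_\lambda)} - \bigl\|G_{a,r}\bigr\|_{L^p(B_k,\,d\eta_\lambda)},
\]
where by design the first term captures a uniform fraction of $\|h_k\|_{L^p(d\eta_\lambda)}$. The key point is to preclude $G_{a,r}$ from simultaneously concentrating on $B_k$ and staying close to $h_k\approx g$ on a fixed neighborhood of the origin: a single Gaussian peaked near $y_k$ is exponentially small at the origin, while a Gaussian centered near the origin is exponentially small on $B_k$.

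The hard part will be making this dichotomy quantitative uniformly in the two-parameter family $\{G_{a,r}\}$. I plan to split into cases according to the size of $a$ and whether $|r|$ is bounded or tracks $y_k$, evaluating $\|G_{a,r}\|_{L^p(I,d\eta_\lambda)}^p$ on $I\in\{B_k,B(0,1)\}$ by Laplace-type asymptotics (the weight $|y|^\lambda$ being only a polynomial correction to a Gaussian integral) and showing that any $G_{a,r}$ whose contribution on $B_k$ is a nontrivial fraction of $\|h_k\|_{L^p(d\eta_\lambda)}$ must differ from $h_k$ by at least a definite constant fraction of $\|h_k\|_{L^p(d\eta_\lambda)}$ on a fixed neighborhood of the origin. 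Summing the losses over the two regions yields $\|h_k - G_{a,r}\|_{L^p(d\eta_\lambda)} \geq C(p,\lambda)\|h_k\|_{L^p(d\eta_\lambda)}$ uniformly in $(a,r)$, which is the desired bound.
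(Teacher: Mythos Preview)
Your overall setup is the same as the paper's: take $f_k$ from Theorem~\ref{thm:example}, set $h_k(x)=g(x)\sqrt{f_k(2\sqrt\pi x)}$, and use Carlen's inequality to get $\delta_{\BH}(h_k)\le\delta(f_k)\to 0$. But the argument you outline for the distance lower bound has a real gap and misses the simplification that makes the paper's proof work.

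\textbf{The rearrangement step.} You propose to handle the full two-parameter family $\{G_{a,r}\}$ by a case analysis on both $a$ and $r$. The paper avoids this entirely: since $h_k$ and the weight $|x|^\lambda$ are symmetric and the symmetric decreasing rearrangement of $G_{a,r}$ is $G_a=G_{a,0}$, a rearrangement inequality (Lieb--Loss, Theorem~3.5) gives
\[
\inf_{a>0,\,r\in\R}\|h_k-G_{a,r}\|_{L^p(d\eta_\lambda)}=\inf_{a>0}\|h_k-G_a\|_{L^p(d\eta_\lambda)},
\]
reducing the problem to a single parameter. This is the key structural move you are missing; without it your case analysis on $r$ would be substantially more involved than necessary.

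\textbf{The dichotomy as stated does not close.} You write that any $G_{a,r}$ whose mass on $B_k$ is a nontrivial fraction of $\|h_k\|_{L^p(d\eta_\lambda)}$ ``must differ from $h_k$ by at least a definite constant fraction of $\|h_k\|_{L^p(d\eta_\lambda)}$ on a fixed neighborhood of the origin.'' But on any fixed neighborhood of the origin $h_k\approx\sqrt{c_k}\,g$ has \emph{bounded} $L^p(d\eta_\lambda)$ norm, while $\|h_k\|_{L^p(d\eta_\lambda)}\to\infty$. So the near-origin loss you can extract is only $O(1)$, not a constant fraction of $\|h_k\|$. The correct observation in that case is different: the bump of $h_k$ has amplitude $\sqrt{c_kr_k}\to 0$, so if $G_{a,r}$ has large $L^p(d\eta_\lambda)$ mass on $B_k$ then already $\|h_k-G_{a,r}\|_{L^p(B_k,d\eta_\lambda)}$ is comparable to $\|G_{a,r}\|_{L^p(B_k,d\eta_\lambda)}$ itself, not to the near-origin contribution.

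\textbf{Where $p\ge 2(\lambda+1)$ actually enters.} You locate the threshold in the competition between the bump height and the Gaussian damping. In the paper the role of the hypothesis is cleaner: one computes
\[
\|G_a\|_{L^p(d\eta_\lambda)}^p=C(p,\lambda)\,a^{(p-2\lambda-2)/4},
\]
so for $p=2(\lambda+1)$ this is independent of $a$ and for $p>2(\lambda+1)$ it diverges only as $a\to\infty$. Either way, $\sup_{a\le a_0}\|G_a\|_{L^p(d\eta_\lambda)}<\infty$, and since $\|h_k\|_{L^p(d\eta_\lambda)}\to\infty$ the triangle inequality alone handles all bounded $a$. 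The only genuinely nontrivial case is $a\to\infty$ (and only when $p>2(\lambda+1)$), which the paper treats by showing $\|h_k-G_a\|\ge\|h_k-G_\pi\|$ for $a$ large, via a direct comparison on the level set where $\wt G_a\ge 3/2$. Your Laplace-asymptotics plan could in principle reach the same endpoint, but it is a longer road and you have not yet pinned down the parameters $(s,t)$ that make $\|h_k\|_{L^p(d\eta_\lambda)}\to\infty$ in the first place.
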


\begin{theorem}\label{thm:ins-bhi-ew}
Let $p>\theta>0$ and $dm_{\theta}=g^{-\theta}dx$. There exists a sequence of nonnegative functions $\{h_{k}\}_{k\geq 1}$ in $L^{p}(dm_{\theta})$ such that $\norm{h_{k}}_{2}=1$, $\delta_{\BH}(h_{k})\to 0$, $\|h_{k}\|_{L^{p}(dm_{\theta})}\to \infty$, and
\begin{align*}
	\liminf_{k\to\infty}\frac{\dist_{L^{p}(dm_{\theta})}(h_{k},\fG)}{\norm{h_{k}}_{L^{p}(dm_{\theta})}}\geq C(p,\theta)>0.
\end{align*}
\end{theorem}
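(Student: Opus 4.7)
The plan is to parallel the structure of Theorem~\ref{thm:ins-bhi-pw}, replacing the polynomial weight $|x|^{\lambda}$ with the Gaussian weight $g^{-\theta}$. Let $f_{k}$ be the sequence of centered densities produced by Theorem~\ref{thm:example} on $\R$ with suitable parameters $s,t>0$, and set $h_{k}(x)=\sqrt{f_{k}(2\sqrt{\pi}x)}\,g(x)$ as in \eqref{eq:htrans}. A change of variables gives $\|h_{k}\|_{2}=1$, and the inequality $\delta_{\BH}(h)\leq \delta(f)$ recalled just before the theorem yields $\delta_{\BH}(h_{k})\to 0$ since $\delta(f_{k})\to 0$ by Theorem~\ref{thm:example}(i).

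Next I would rewrite
\[
\|h_{k}\|_{L^{p}(dm_{\theta})}^{p}
\;=\;\frac{2^{(p-\theta)/4}}{2\sqrt{\pi}}\int_{\R} f_{k}(y)^{p/2}\,e^{-(p-\theta)y^{2}/4}\,dy
\]
via the substitution $y=2\sqrt{\pi}x$, and split the integrand into the background $f_{k}\approx 1$ and the localized perturbation. The assumption $p>\theta$ makes the exponential factor integrable against the background. Meanwhile, the bumps of $f_{k}$ in the construction of Theorem~\ref{thm:example} sit around $|y|\sim k$ with a height whose $p/2$-th power outruns the Gaussian decay of $e^{-(p-\theta)y^{2}/4}$, producing a contribution of order $k^{-tp/2}e^{\theta k^{2}/4}$. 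Since $\theta>0$, this forces $\|h_{k}\|_{L^{p}(dm_{\theta})}\to\infty$.

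To lower bound the distance to $\fG$, I first compute by completing the square that
\[
\|G_{a,r}\|_{L^{p}(dm_{\theta})}^{p}
\;=\;C(a,p,\theta)\,\exp\!\Big(\tfrac{ap\theta\pi}{ap-\theta\pi}\,r^{2}\Big),
\]
which is finite only for $a>\theta\pi/p$. On the complement $\|h_{k}-G_{a,r}\|_{L^{p}(dm_{\theta})}=+\infty$, so the infimum over $\fG$ may be restricted to $a>\theta\pi/p$. When additionally $\|G_{a,r}\|_{L^{p}(dm_{\theta})}\geq 2\|h_{k}\|_{L^{p}(dm_{\theta})}$, the reverse triangle inequality already gives $\|h_{k}-G_{a,r}\|_{L^{p}(dm_{\theta})}\geq \|h_{k}\|_{L^{p}(dm_{\theta})}$. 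Combined with the explicit formula above, this restricts the remaining infimum to a range where $a$ is bounded below by $\theta\pi/p$ and $|r|$ is at most of order $k$.

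The main obstacle is then to exclude, uniformly over such $(a,r)$, that a single Gaussian $G_{a,r}$ can approximate both bumps of $h_{k}$ at $x=\pm k/(2\sqrt{\pi})$. The two bump regions are separated by a distance of order $k$, while the effective width of any $G_{a,r}$ with $a>\theta\pi/p$ is $O(1)$; hence at least one bump, around some $x_{\ast}$, satisfies $|r-x_{\ast}|\geq k/10$ and therefore $G_{a,r}\leq e^{-ck^{2}}$ on that region. There $|h_{k}-G_{a,r}|\geq\tfrac{1}{2}h_{k}$ for $k$ large, and integrating against $dm_{\theta}$ over this single bump region recovers a definite fraction of $\|h_{k}\|_{L^{p}(dm_{\theta})}^{p}$. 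Combining the three regimes yields $\dist_{L^{p}(dm_{\theta})}(h_{k},\fG)\geq C(p,\theta)\|h_{k}\|_{L^{p}(dm_{\theta})}$ with $C(p,\theta)>0$, as required.
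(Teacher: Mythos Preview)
Your overall strategy is reasonable and genuinely different from the paper's, but there is a real gap in the last step, and it is worth pointing out what the paper does instead.

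\textbf{The paper's approach.} The paper exploits the symmetry of $h_k$ and of the weight $g^{-\theta}$: by the rearrangement inequality one reduces the infimum over $G_{a,r}$ to centered Gaussians $G_a$. After this reduction the paper splits into $a\ge\pi$ (where $\|G_a\|_{L^p(dm_\theta)}$ is either bounded or controlled by a separate lemma, so the triangle inequality suffices) and $\theta\pi/p<a<\pi$, where it performs a careful pointwise comparison of $\widetilde G_a(y)$ and $\sqrt{f_k(y)}$ on a specific window in $y$ (via the quadratic $Q_{v,k}$) to conclude $\widetilde G_a\le\tfrac12\sqrt{f_k}$ there, uniformly in $a$. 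You skip the rearrangement and handle general $(a,r)$ by the ``two bumps, one Gaussian'' idea. That idea is sound in spirit and avoids the lemmas, but it forces you to do the same delicate exponential comparison the paper does in its second case, and your sketch does not do it.

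\textbf{The gap.} You assert that on the far bump $G_{a,r}\le e^{-ck^2}$ and therefore $|h_k-G_{a,r}|\ge\tfrac12 h_k$, and that integrating against $dm_\theta$ on that bump recovers a fixed fraction of $\|h_k\|_{L^p(dm_\theta)}^p$. These two claims are in tension. The mass of $h_k^p\,dm_\theta$ on the positive bump is not concentrated near the peak of $h_k$ (where $h_k\sim\sqrt{r_k}$); the Gaussian weight $g^{-\theta}$ pushes it out to $y\approx\tfrac{p b_k}{p-\theta}$, and at that location $h_k$ itself is of order $\sqrt{r_k}\,e^{-\theta^2 k^2/(p-\theta)^2}$, i.e.\ exponentially small. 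So ``$G_{a,r}\le e^{-ck^2}$ for some $c>0$'' does not automatically give $G_{a,r}\le\tfrac12 h_k$ there; you must check that your exponent $c$ beats $\theta^2/(p-\theta)^2$. This \emph{does} hold once you use the correct separation (the far bump is at distance at least $|x_*|=\tfrac{pk}{\sqrt\pi(p-\theta)}$ from $r$, not merely $k/10$) and optimize over $a>\theta\pi/p$, yielding a decay rate $\theta p/(p-\theta)^2>\theta^2/(p-\theta)^2$; but this is exactly the computation your sketch omits and the paper carries out (for $r=0$) via $Q_{v,k}$. Relatedly, your stated growth $\|h_k\|_{L^p(dm_\theta)}^p\sim k^{-tp/2}e^{\theta k^2/4}$ is off; the correct exponential is $e^{p\theta b_k^2/(4(p-\theta))}$, which matters for the comparison. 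If you track these constants carefully your route can be completed; otherwise the rearrangement reduction to $r=0$ followed by the paper's case analysis is the cleaner path.
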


We emphasize that $d\eta_\lambda$ is a more suitable reference measure than $dm_{\theta}$ in a sense that $L^{p}(d\eta_{\lambda})$ contains all optimizers $\fG$ whereas $L^{p}(dm_{\theta})$ does not (see~\eqref{eq:Lpdmcondition}). If we choose the Lebesgue measure as a reference measure (that is, $\theta=0$ in Theorem~\ref{thm:ins-bhi-ew} or $\lambda=0$ in Theorem~\ref{thm:ins-bhi-pw}), then the sequence of functions $h_k$  converges to $g$ in $L^{p}$ (see Remark~\ref{rmk:bhi-lp-conv}). It remains open to show $L^{p}$--stability for the BHI with respect to the Lebesgue measure.

\subsection{Main Lemma}
The main idea of the proofs of the instability results is to consider the weighted sum of the optimizers for the LSI. Roughly speaking, we study the sum of Gaussian measures $\gamma+r(\gamma_b+\gamma_{-b})$ where $r>0$ and $\gamma_b$ is the Gaussian measure with barycenter $b$. We then observe the behaviors of the deficit of the LSI and other quantities such as the relative entropy and the Wasserstein distances when the barycenter $b$ is large and the weight $r$ is small. It turns out that the deficit of the LSI does not see the barycenter and depends only on the weight asymptotically. Since other quantities rely on both $b$ and $r$, the example leads to several types of instability results. The observation is summarized in the following lemma.

\begin{lemma}\label{lem:example}
For any $s,t>0$, there exists a sequence of centered probability measures $\mu_k$ on $\R^n$ such that 
\begin{enumerate}
	\item $\delta(\mu_k)=\frac{st}{2}k^{-t}\log k+o(k^{-t}\log k)$, 
	\item $\H(\mu_k)=sk^{2-t}-\frac{st}{2}k^{-t}\log k+o(k^{-t}\log k)$,
	\item $W_2^2(\mu_k,\gamma)=2sk^{2-t}+O(k^{1-t}\left(\log k\right)^{\frac{1}{2}})$,
	\item $m_2(\mu_k)-m_2(\gamma)=2sk^{2-t}+\frac{s}{4}m_2(\gamma)k^{-t}+o(k^{-t})$,
	\item $sk^{p-t}+o(k^{p-t})\leq m_p(\mu_k)-m_p(\gamma)\leq 2^{2(p-1)}sk^{p-t}+o(k^{p-t})$ for any $p\in[1,\infty)$.
\end{enumerate}
\end{lemma}
In the proof of Lemma~\ref{lem:example}, we modify the weighted sum of Gaussian measures so as to remove the overlaps (see Figure~\ref{fig}). This facilitates the detailed computations and provides precise estimates for the Fisher information, the relative entropy, the distances and the moments. This leads to, in particular, instability for the Beckner--Hirschman inequality (Theorem~\ref{thm:ins-bhi-pw} and Theorem~\ref{thm:ins-bhi-ew}) and the observations on the sharp exponents given in Remark~\ref{rmk:lsi-sharp-L1}, Remark~\ref{rmk:lsi-sharp-w1}, and Remark~\ref{rmk:ti-sharp-w1}. The asymptotic estimates also reveal how such quantities are related to each other when the deficit converges to 0. We believe that these concrete estimates may be applied to other related inequalities.

After this paper has been announced in May 2018, another counterexamples were produced in~\cite{Eldan2019a}, where it was shown that the LSI is unstable in the Wasserstein distances and there is no dimension-free general stability for $W_2$. We note that the construction of the examples in~\cite{Eldan2019a} is in the same spirit as in this paper. They considered the mixture of two Gaussian measures and manipulated the barycenters, the weight, and the covariances to get the desired the deficit of the LSI and the Wasserstein distances. Campared to the example presented in this paper, it seems not easy to apply the counterexamples of~\cite{Eldan2019a} to the $L^p$ distances in the setting of the entropic uncertainty principle. Also, it seems not clear how the examples in~\cite{Eldan2019a} give similar arguments on the sharp exponents as in Remark~\ref{rmk:lsi-sharp-w1} and Remark~\ref{rmk:ti-sharp-w1}.

\subsection{Organization} 
The rest of the paper is organized as follows. In Section~\ref{S:BHI}, we provide basic facts about the Beckner--Hirschman inequality and discuss its relation to the sharp Hausdorff--Young inequality. We present the proof of Lemma~\ref{lem:example} in Section~\ref{S:example}. In Section~\ref{S:prfmain}, we prove the main results. Applying Lemma~\ref{lem:example}, we prove instability results for the log Sobolev inequality and Talagrand's transportation inequality. In Section~\ref{S:prf-bhi}, we prove the instability results for the Beckner--Hirschman inequality. 

\subsection{Notation} 
Let $a_k$ and $b_k$ be sequences of real numbers. We say $a_k=O(b_k)$ if there exist $k_0\in\N$ and $M>0$ such that $|a_k|\leq M|b_k|$ for all $k\geq k_0$. If $M$ depends on some parameters $p,q,\cdots$, then we use the notation $a_k=O_{p,q,\cdots}(b_k)$. We say $a_k=o(b_k)$ if for any $\varep>0$, there exists $k_0\in\N$ such that $|a_k|\leq \varep|b_k|$ for all $k\geq k_0$. For a set $A$ in $\R^n$, the indicator (or characteristic) function of $A$ is denoted by $\indi{A}$.

\section{The Beckner--Hirschman inequality}\label{S:BHI}
In this section, we discuss the Beckner--Hirschman inequality and its relation to the sharp Hausdorff--Young inequality. In particular, we review the stability result for the sharp Hausdorff--Young inequality by Christ~\cite{Christ2014a} and how it can be interpreted in terms of stability for the Beckner--Hirschman inequality heuristically. Together with the instability results (Theorem~\ref{thm:ins-bhi-pw} and Theorem~\ref{thm:ins-bhi-ew}), we can get a better idea what a possible stability result for the Bechner--Hirschman inequality would be. 

Let $h\in L^{2}(\R^{n})$ with $h\geq0$ and $\norm{h}_{2}=1$, then the Shannon entropy of $h$ is given by
\begin{align*}
	S(h)=-\int_{\R^{n}}h\log h\,dx.
\end{align*}
The Beckner--Hirschman inequality (the BHI for short) states that
\begin{align}\label{eq:BHineq}
	S(|h|^{2})+S(|\wh{h}|^{2})\geq n(1-\log 2),
\end{align}
where $\wh{h}(\xi) = \int_{\R^{n}}e^{-2\pi i x\cdot \xi}h(x)dx$. By differentiating the (non-sharp) Hausdorff--Young inequality in $p$ at $p=2$, Hirschman~\cite{Hirschman1957a} obtained $S(|h|^{2})+S(|\wh{h}|^{2})\geq 0$. He conjectured in~\cite{Hirschman1957a} that the Gaussian functions are extremal for the inequality and the best constant in the right hand side of~\eqref{eq:BHineq} is $n(1-\log 2)$.  Beckner~\cite{Beckner1975a} found the best constant in the Hausdorff--Young inequality for all $p\in[1,2]$, which gave an affirmative answer to the conjecture.
  
Even though the Gaussian functions satisfy the equality, it was an open problem to show that the Gaussians are the only optimizers. In~\cite{Lieb1990a}, Lieb characterized the classes of optimizers for the Hausdorff--Young inequality and the BHI. Indeed, he proved that every optimizer for a convolution operator with a Gaussian kernel is Gaussian. Equality holds in~\eqref{eq:BHineq} if and only if $h$ is of the form
\begin{align*}
	h(x)=ce^{-\brk{x,Jx}+x\cdot v},
\end{align*}
where $c\in\C$, $v\in \C^{n}$ and $J$ is a $n\times n$ real positive definite matrix (see~\cite{Carlen1991a}*{Remarks in p.207}).

Let $g(x)=2^{\frac{n}{4}}e^{-\pi|x|^{2}}$ and $dm=g(x)^{2}dx$. The Fourier--Wiener transform is defined by $\cW f=\frac{1}{g}(\wh{fg})$. Let $f\in L^{2}(dm)$ with $\norm{f}_{L^{2}(dm)}=1$. By the Plancherel theorem, we have $\norm{\cW f}_{L^{2}(dm)}=\norm{f}_{L^{2}(dm)}=1$. For a normalized function $f$ in $L^{2}(dm)$, we define the deficit of the LSI with respect to $dm$ by
\begin{align*}
	\delta_{c}(f)
	:= \frac{1}{2\pi}\int_{\R^{n}}|\nabla f|^{2}\,dm - \int_{\R^{n}}|f|^{2}\log |f|^{2}\,dm.
\end{align*}
We note that $\delta(f)=\delta_{c}(u_{f})$ where $u_{f}(x)=(f(2\sqrt{\pi}x))^{1/2}$. Applying the BHI~\eqref{eq:BHineq} with $h=fg$, Carlen~\cite{Carlen1991a} characterized the equality cases of the LSI by showing that
\begin{align}\label{eq:Carlen_deficit}
	\delta_{c}(f) - \int_{\R^{n}}|\cW f|^{2}\log |\cW f|^{2}dm
	= S(|fg|^{2})+S(|\wh{fg}|^{2})-n(1-\log 2)
	\geq 0.
\end{align}
We define the deficit of the BHI by $\delta_{\BH}(h)= S(|h|^{2})+S(|\wh{h}|^{2})-n(1-\log 2)$. Then it follows from~\eqref{eq:Carlen_deficit} that $\delta_{c}(f)\geq \delta_{\BH}(fg)$.

We review the stability result for the Hausdorff--Young inequality by Christ~\cite{Christ2014a} and investigate how it is related to stability for the BHI. Let $p\in[1,2]$, $q=p/(p-1)$, and $\bA_{p}=p^{1/2p}q^{-1/2q}$. For a complex-valued function $h\in L^{p}(\R^{n})$, the sharp Hausdorff--Young inequality by Babenko~\cite{Babenko1961a} and Beckner~\cite{Beckner1975a} states that 
\begin{align*}
	\norm{\wh{h}}_{q}\leq \bA_{p}^{n}\norm{h}_{p}.
\end{align*}
Lieb~\cite{Lieb1990a} showed that equality holds if and only if $h(x)=ce^{-Q(x)+x\cdot v}$ where $v\in \C^{n}$, $c\in\C$, and $Q$ is a positive definite real quadratic form. Let $\sG$ be the set of all optimizers for the Hausdorff--Young inequality. Define $\sP(\R^{n})$ to be the set of all polynomials $P:\R^{n}\to\C$ of the form $P(x)=-x\cdot Ax+b\cdot x+c$ where $b\in\C^{n}$, $c\in\C$, and $A$ is a symmetric, positive definite real matrix. Note that $\sG\setminus\{0\}=\{e^{P}:P\in\sP(\R^{n})\}$. Let $u\in \sG\setminus\{0\}$. The real tangent space to $\sG$ at $u$ is $T_{u}\sG=\{Pu:P\in\sP(\R^{n})\}$, and the normal space to $\sG$ at $u$ is
\begin{align}\label{eq:normalspace}
	N_{u}\sG=\left\{h\in L^{p}:\Re\left(\int_{\R^{n}}hP\ol{u}|u|^{p-2}dx\right)=0\right\}.
\end{align}
Let $\dist_{p}(h,\sG)=\inf_{u\in\sG}\norm{h-u}_{p}$. For each $p\in[1,2]$, there exists $\delta_{0}>0$ such that if a nonzero function $h$ satisfies $\dist_{p}(h,\sG)\leq \delta_{0}\norm{h}_{p}$, then $h$ can be written as $h=h^{\bot}+\pi(h)$ where $\pi(h)\in \sG$ and $h^{\bot}\in N_{\pi(h)}\sG$. Since $\norm{h^{\bot}}_{p}=\norm{h-\pi(h)}_{p}$ and $\pi(h)\in\sG$, we have $\norm{h^{\bot}}_{p}\geq \dist_{p}(h,\sG)$. For a function $h$ satisfying $\dist_{p}(h,\sG)\leq \delta_{0}\norm{h}_{p}$, we define $\dist^{\ast}_{p}(h,\sG)=\norm{h^{\bot}}_{p}$. The deficit of the Hausdorff--Young inequality is given by 
\begin{align*}
	\delta_{\HY}(h;p):=\bA_{p}^{n}-\frac{\norm{\wh{h}}_{q}}{\norm{h}_{p}}.
\end{align*}
Let $\bB_{p,n}=\frac{1}{2}(p-1)(2-p)\bA_{p}^{n}$. For $\eta>0$, we define 
\begin{align*}
	h^{\bot}_{\eta}=
	\begin{cases}
		h^{\bot}, 	& |h^{\bot}|\leq \eta |\pi(h)|,\\
		0,			& |h^{\bot}|> \eta |\pi(h)|.
	\end{cases}
\end{align*}

In~\cite{Christ2014a}, Christ proved the following quantitative Hausdorff--Young inequality. He showed a compactness result using combinatoric arguments, and then computed the second variation to obtain remainder terms for the Hausdorff--Young inequality. 

\begin{theorem}[{\cite{Christ2014a}*{Theorem 1.3}}]%\label{thm_Christ}
	For each $n\geq 1$ and $p\in (1,2)$, there exist $\eta_{0},\gamma>0$ and $C,c>0$ such that for all $\eta\in(0,\eta_{0})$, if a nonzero function $h\in L^{p}(\R^{n})$ satisfies $\dist_{p}(h,\sG)\leq \eta^{\gamma}\norm{h}_{p}$, then $\delta_{\HY}(h;p)\geq R_{1}(h;p)+R_{2}(h;p)$ where
	\begin{align}\label{eq:R1R2}
		R_{1}(h;p)
		&= (\bB_{p,n}-C\eta)\norm{h}_{p}^{-p}\left(\int_{\R^{n}}|h^{\bot}_{\eta}|^{2}|\pi(h)|^{p-2}dx\right),\\
		R_{2}(h;p)
		&= c\eta^{2-p}\Paren{\frac{\dist_{p}(h,\sG)}{\norm{h}_{p}}}^{p-2}\Paren{\frac{\norm{h^{\bot}-h^{\bot}_{\eta}}_{p}}{\norm{h}_{p}}}^{2}.\nonumber
	\end{align}
\end{theorem}

By differentiating the sharp Hausdorff--Young inequality, one can derive the Beckner--Hirschman inequality. Indeed, let $h\in L^{1}(\R^{n})\cap L^{2}(\R^{n})$ with $\|h\|_{2}=1$. Since $\delta_{\HY}(h;p)\geq 0$ and $\delta_{\HY}(h;2)=0$, the derivatives of $\delta_{\HY}(h;p)$ with respect to $p$ at $p=2$ is less than or equal to 0, which yields
\begin{align*}
	-\frac{d}{dp}\delta_{\HY}(h,p)|_{p=2}
	=\frac{1}{4}\Paren{S(|h|^{2})+S(|\wh{h}|^{2})-n(1-\log 2)}\geq 0.
\end{align*}
A natural question is whether the same argument yields a stability result for the BHI from that of the Hausdorff--Young inequality. In what follows, we fix a function $h\in L^{1}(\R^{n})\cap L^{2}(\R^{n})$ that satisfies $\dist_{p}(h,\sG)\leq \delta_{0}\norm{h}_{p}$ and $\|h\|_{2}=1$ for all $p\in[1,2]$. Note that $h^{\bot}$ and $\pi(h)$ depend on $p$. We also assume the following:
\begin{enumerate}
	\item We can choose a constant $\delta_{0}$ to be uniform in $p\in[1,2]$.
	\item The constant $\eta$ in~\eqref{eq:R1R2} is independent of $p\in (1,2)$.
	\item We choose the constant $C=C(p)$ in~\eqref{eq:R1R2} such that $C$ is differentiable on $(1,2]$ and $C(2)=0$.
	\item $R_{1}(h;p)\geq 0$ for all $p\in(1,2)$.
	\item $h^{\bot}$ and $\pi(h)$ are differentiable with respect to $p$.
\end{enumerate}

We emphasize here that these assumptions are optimistic and speculative. Based on these assumptions, we have $\delta_{\HY}(h;p)\geq R_{1}(h;p)+R_{2}(h;p)\geq R_{1}(h;p)\geq 0$ and $\delta_{\HY}(h;2)=R_{1}(h;2)=0$. Taking the derivative with respect to $p$, we obtain
\begin{align*}
	S(|h|^{2})+S(|\wh{h}|^{2})-n(1-\log 2)
	= -4\frac{d}{dp}\Big(\bA_{p}^{n}-\frac{\norm{\wh{h}}_{q}}{\norm{h}_{p}}\Big)|_{p=2}
	\geq -4\frac{d}{dp}R_{1}(h;p)|_{p=2}
\end{align*}
and
\begin{align*}
	\frac{d}{dp}R_{1}(h;p)|_{p=2}
	&=  \frac{d}{dp}(\bB_{p,n}-C\eta)|_{p=2}\Big(\lim_{p\uparrow2} \int_{\R^{n}}|h^{\bot}_{\eta}|^{2}|\pi(h)|^{p-2}dx\Big)\\
	&=  -(\frac{1}{2}+C'(2)\eta)\Big(\lim_{p\uparrow2} \int_{\R^{n}}|h^{\bot}_{\eta}|^{2}|\pi(h)|^{p-2}dx\Big).
\end{align*}
Let $h$ be a nonnegative function and $L_{\eta}=\{x:|h^{\bot}(x)|\leq\eta|\pi(h)(x)|\}$, then $h^{\bot}_{\eta}=h^{\bot}\cdot \indi{L_{\eta}}$. By Fatou's lemma, we get
\begin{align*}
	\lim_{p\uparrow2} \int_{\R}|h^{\bot}_{\eta}|^{2}|\pi(h)|^{p-2}dx
	\geq \int_{\R}|h^{\bot}_{\eta}|^{2}dx 
	= \int_{L_{\eta}}|h-\pi(h)|^{2}dx. 
\end{align*}
Since $h-\pi(h)\in N_{\pi(h)}\sG$, it follows from~\eqref{eq:normalspace} that $\pi(h)$ is nonnegative with $\norm{\pi(h)}_{2}\leq 1$. Let
\begin{align*}
	\wt{\fG} =\{ u\in\sG: u\geq 0, \norm{u}_{2}\leq 1 \}.
\end{align*}
Note that the set of the optimizers for the BHI defined in~\eqref{eq:BH_G_{ab}}, $\fG$, is contained in $\wt{\fG}$ and $\pi(h)\in\wt{\fG}$. Let $\eta$ be small enough that $\frac{1}{2}+C'(2)\eta>0$, then we get 
\begin{align*}
	\delta_{\BH}(f) \geq C_{\eta}\dist_{2}(\wt{h},\wt{\fG})^{2},
\end{align*}
where $\dist_{2}(\wt{h},\wt{\fG})=\inf_{u\in \wt{\fG}}\|\wt{h}-u\|_{2}$ and
\begin{align*}
	\wt{h}(x)=
	\begin{cases}
		h(x), & x\in L_{\eta},\\
		\pi(h)(x), & x\notin L_{\eta}.
	\end{cases}
\end{align*} 
Our observation suggests that there could be a stability bound for the BHI in terms of $L^{2}$ or weaker distance than $L^{2}$ with respect to the Lebesgue measure. We remark that Theorem~\ref{thm:ins-bhi-pw} and Theorem~\ref{thm:ins-bhi-ew} do not contradict to this observation.

In Theorem~\ref{thm:ins-bhi-ew}, we show that the BHI is not stable in terms of $\dist_{L^{p}(dm_{\theta})}(\cdot,\fG)$ with normalization for $p>\theta>0$. In Remark~\ref{rmk:bhi-lp-conv}, we explain that our example constructed in Theorem~\ref{thm:ins-bhi-ew} does not give any instability results for the BHI when $\theta =0$. Note that $\dist_{2}(\cdot,\cdot)$ is the boundary case when $\theta=0$ and $p=2$.  Compared to Theorem~\ref{thm:ins-bhi-pw}, $\dist_{2}(\cdot,\cdot)$ can be seen as the case when $\lambda=0$ (so that $p\geq 2(\lambda+1)=2$). Furthermore, Theorem~\ref{thm:ins-bhi-pw} implies that $L^{2}$--stability would be best possible if exists.   
\section{Proof of main lemma}\label{S:example}
Before proving the main lemma, we give a simple observation.  
\begin{example}
Let $b\in\R^{n}$, $g_{b}(x)=e^{b\cdot x-\frac{|b|^{2}}{2}}$, and $d\nu_{b}=g_{b}d\gamma$. Since $g_{b}$ are the optimizers of the LSI, we have $\delta(g_{b})=0$ for all $b\in\R^{n}$. Indeed, a direct calculation yields that
\begin{align*}
	\I(\nu_{b})
	&= \int_{\R^{n}}\frac{|\nabla g_{b}|^{2}}{g_{b}}d\gamma
	= |b|^{2}\int_{\R^{n}}g_{b}d\gamma=|b|^{2},\\
	\H(\nu_{b})
	&= \int_{\R^{n}}g_{b}\log g_{b}d\gamma
	= \int_{\R^{n}}\Paren{b\cdot (x+b)-\frac{1}{2}|b|^{2}}d\gamma
	= \frac{1}{2}|b|^{2}	,\\
	m_{2}(\nu_{b})
	&= \int_{\R^{n}}|x|^{2}g_{b}d\gamma
	=\int_{\R^{n}}|x+b|^{2}d\gamma
	=n+|b|^{2}.
\end{align*}
We have $\I(\nu_{b})$, $\H(\nu_{b})$, and $m_{2}(\nu_{b})$ tend to $\infty$, as $|b|\to\infty$. We remark that the deficit does not see the behavior of the barycenter $b$, whereas the other quantities depend on $b$. Notice also that the measure $g_{b}d\gamma$ is not centered provided  $b\neq 0$. 
\end{example}

As discussed in the introduction, the idea of the proof is to consider the weighted sum of the optimizers $g_b$ for the LSI. To obtain the precise estimates for the relevant quantities, we cut the overlaps of the densities of the optimizers and connect them to get a $C^\infty$ density. 

\begin{figure}[t]
\centering
\includegraphics[scale=0.28]{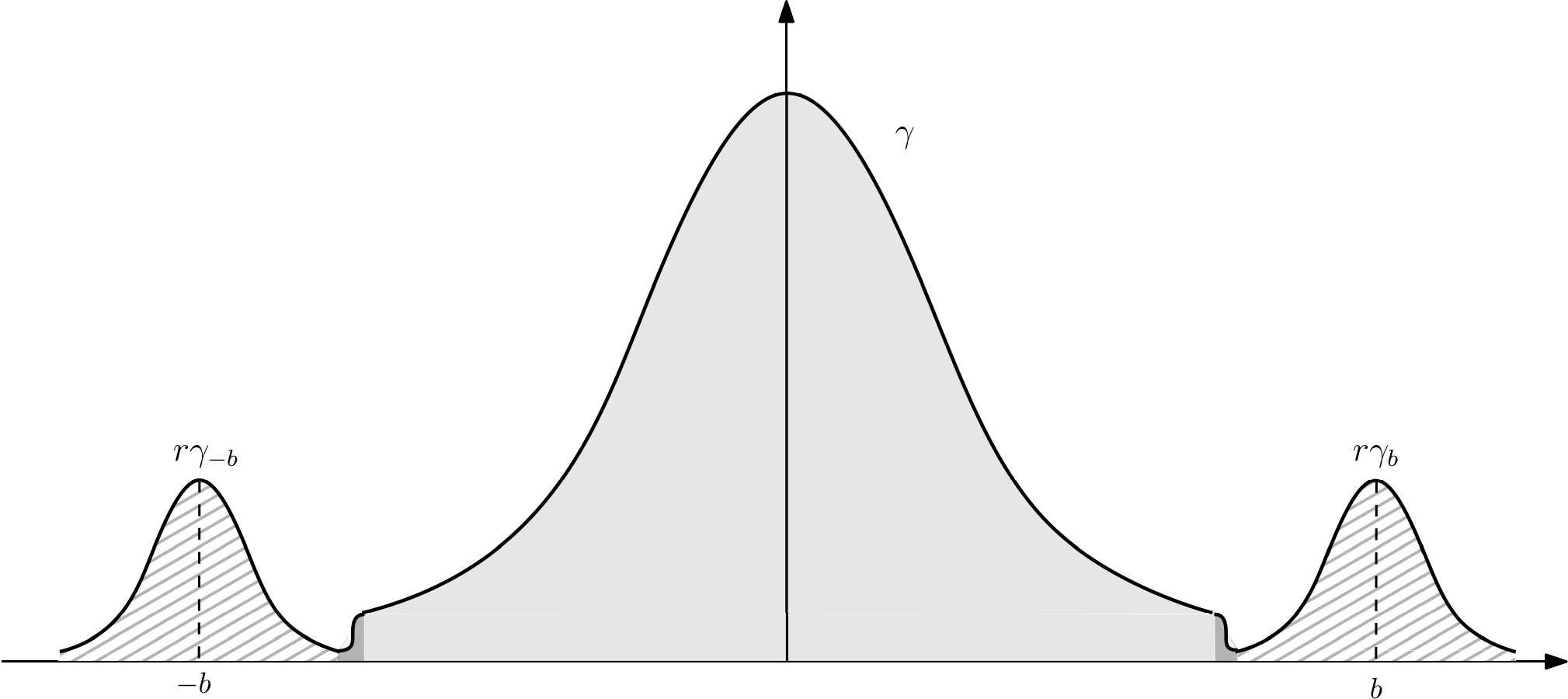}
\caption{The graph of $f_k(x)\gamma(x)$ constructed in the proof Lemma~\ref{lem:example}}
\label{fig}
\end{figure}

\begin{proof}[Proof of Lemma~\ref{lem:example}]
It suffices to consider the case $n=1$ for the following reason. Suppose that $\mu_{k}$ is the desired sequence of probability measures on $\R$. Let $\gamma_{n-1}$ the standard Gaussian measure on $\R^{n-1}$ and $\wt{\mu}_k=\mu_{k}\otimes \gamma_{n-1}$, then we have $\I(\wt{\mu}_k)=\I(\mu_{k})$, $\H(\wt{\mu}_k)=\H(\mu_{k})$, $\delta(\wt{\mu}_k)=\delta(\mu_{k})$, $m_{2}(\wt{\mu}_k)=(n-1)+m_{2}(\mu_{k})$, and $m_{p}(\wt{\mu}_k)\geq 2^{1-p} m_{p}(\mu_{k})-m_{p}(\gamma_{n-1})$. 
	
Let $d\gamma=(2\pi)^{-\frac{1}{2}}e^{-x^{2}/2}\,dx=\gamma(x)dx$ and $\Phi(x)=\int_{-\infty}^{x}d\gamma$. Let $s,t>0$ be fixed. We define a sequence of functions $\wt{f}_k$ in $C^{\infty}(\R)$ by
\begin{align*}%\label{eq:density}
	\wt{f}_k(x)
	= \begin{cases}
        1, & |x|\le k-\frac{1}{2k}, \\
		L_{k}(x), & k-\frac{1}{2k}<|x|\le k, \\
        r_k e^{2k(x-k)}, & |x|>k,
	\end{cases}
\end{align*}
where $r_k=\frac{1}{4}\min\{sk^{-t},1\}$, and $L_{k}$ is a function in $C^{\infty}(\R)$ satisfying $L_k(k-\frac{1}{2k})=1$, $L_k(k)=r_k$, and
\begin{align*}
    r_k\le |L_k(x)|\le 1\quad \text{and} \quad |L_{k}'(x)|\leq 4k \quad \text{ for  } k-\frac{1}{2k}<|x|\le k.
\end{align*}
Note that $\wt{f}_k\in C^{\infty}(\R)\cap L^{1}(d\gamma)$.
Since $\Phi(k-\frac{1}{2k})-1,\Phi(k)-1$ are of order $e^{-ck^2}$ for some $c>0$ and $|L_k|\le 1$, we have
\begin{align*}
	\|\wt{f}_k\|_{L^{1}(d\gamma)}
	=2\left(\Phi(k-\frac{1}{2k})-\frac12+\int_{k-\frac{1}{2k}}^k L_k\,d\gamma + r_k\Phi(k)\right)
    =1+2r_k+O(e^{-ck^2}).
\end{align*}
Define $f_{k}=\wt{f}_k/\|\wt{f}_k\|_{L^{1}(d\gamma)}$ and $d\mu_{k}=f_{k}\,d\gamma$. Then we have
\begin{align*}
	\I(f_k)
	= \frac{2}{\|\wt{f}_k\|_{L^1(d\gamma)}}\left( \int_{k-\frac{1}{2k}}^{k}\frac{|L_{k}'(x)|^{2}}{L_{k}(x)}\,d\gamma +4r_k k^2 \Phi(k) \right)
\end{align*}
and
\begin{align*}
	\H(f_{k})
	&= \frac{1}{\|\wt{f}_k\|_{L^1(d\gamma)}}\left(\int \wt{f}_k\log \wt{f}_k\, d\gamma- \|\wt{f}_k\|_{L^1(d\gamma)}\log\|\wt{f}_k\|_{L^1(d\gamma)}\right)\\
     &= \frac{2}{\|\wt{f}_k\|_{L^1(d\gamma)}}\left( \int_{k-\frac{1}{2k}}^k L_k\log L_k\, d\gamma +\int_{-k}^\infty r_k\left(\log r_k+2k(x+k)\right)\,d\gamma \right)  -\log\|\wt{f}_k\|_{L^1(d\gamma)}\\
	&= \frac{2}{\|\wt{f}_k\|_{L^1(d\gamma)}}\left( \int_{k-\frac{1}{2k}}^k L_k\log L_k\, d\gamma+ 2k r_k \gamma(k)+r_k\left(2k^2 +\log r_k\right)\Phi(k) \right)  -\log\|\wt{f}_k\|_{L^1(d\gamma)}.
\end{align*}
Here, we used the fact that $\int_{-k}^\infty x\,d\gamma=\gamma(k)$.
It then follows that
\begin{align*}
	\delta(f_{k}) 
    &=\frac{2}{\|\wt{f}_k\|_{L^1(d\gamma)}}\left(\int_{k-\frac{1}{2k}}^{k} \left( \frac{|L_{k}'(x)|^{2}}{2L_{k}(x)} - L_k\log L_k \right) \, d\gamma-2k r_k \gamma(k) -r_k\log r_k\Phi(k) \right) \\
    &\qquad +\log\|\wt{f}_k\|_{L^1(d\gamma)}.
\end{align*}
Since $|L_k'(x)|^2/L_k(x)\le 16k^2/r_k$ and $r_k\le \frac14 sk^{-t}$ for all $k$, we have
\begin{align*}
	\int_{k-\frac{1}{2k}}^{k}\frac{|L_{k}'(x)|^{2}}{L_{k}(x)}\,d\gamma
    \leq \frac{8k}{r_k}\gamma(k-\frac{1}{2k}) = O(e^{-ck^2}).
\end{align*}
Since $|x\log x| \leq 1$ for all $x\in(0,1]$, one has
\begin{align*}
	\left|\int_{k-\frac{1}{2k}}^k L_k\log L_k\, d\gamma\right|
    \leq \frac{1}{2k}\gamma(k-\frac{1}{2k}) = O(e^{-ck^2}).
\end{align*}
Note that $\log\|\wt{f}_k\|_{L^1(d\gamma)}=2r_k+O(r_k^2)$. Therefore, we have 
\begin{align*}
	\delta(f_{k})
	&=-2r_k\log r_k+2r_k+O(r_k^2)+O(e^{-ck^2})\\
	&=-2r_k\log r_k+2r_k+o(r_k)\\
	&=-\frac{s}{2}k^{-t}\left(\log \frac{s}{4}-t\log k \right)
		+\frac{s}{2}k^{-t}
		+o(k^{-t})\\
	&=\frac{st}{2}k^{-t}\log k+\left(\frac{s}{2}\log \frac{4e}{s}\right)k^{-t}+o(k^{-t}),
\end{align*}
and
\begin{align*} 
	\H(f_k)
	&=4k^2 r_k +2r_k\log r_k-\log\|\wt{f}_k\|_{L^1(d\gamma)}+O(e^{-ck^2})\\
	&=s k^{2-t}+\frac{s}{2}k^{-t}\left(\log \frac{s}{4}-t\log k \right) - \frac{s}{2}k^{-t} +o(k^{-t})\\
	&=s k^{2-t}-\frac{st}{2}k^{-t}\log k-\left(\frac{s}{2}\log \frac{4e}{s}\right)k^{-t}+o(k^{-t}).
\end{align*}
Note that $\I(f_k)=4 k^2 r_k+O(e^{-ck^2}) =2sk^{2-t} +O(e^{-ck^2})$. 
By~\eqref{eq:HWI-cons}, we see
\begin{align*}
	2\H(f_k)-4\sqrt{\delta(f_k)\H(f_k)}
	\leq W_2^2(\mu_k,\gamma)
	\leq 2\H(f_k).
\end{align*}
Since
\begin{align*}
	\sqrt{\delta(f_k)\H(f_k)}
	&= \sqrt{\frac{s^2t}{2}k^{2(1-t)}\log k+o(k^{2(1-t)}\log k)}\\
	&=\sqrt{\frac{s^2t}{2}}k^{1-t}\left(\log k\right)^{\frac{1}{2}}+o(k^{1-t}\left(\log k\right)^{\frac{1}{2}}),
\end{align*}
we obtain 
\begin{align*}
    W_2^2(\mu_k,\gamma)
    =2s k^{2-t}+O(k^{1-t}\left(\log k\right)^{\frac{1}{2}})  
    =2s k^{2-t}+o(k^{2-t}).
\end{align*}
For the second moment, we see
\begin{align*}
	m_2(\mu_k)
	&=\frac{2}{\|\wt{f}_k\|_{L^1(d\gamma)}}\left( \int_0^{k-\frac{1}{2k}} |x|^2 \,d\gamma+\int_{k-\frac{1}{2k}}^k |x|^2 L_k\,d\gamma+r_k\int_k^\infty |x|^2 e^{2k (x-k)}\,d\gamma \right)\\
	&=1+\frac{2r_k}{\|\wt{f}_k\|_{L^1(d\gamma)}} \int_{-k}^\infty |x+2k|^2 \,d\gamma+O(e^{-ck^2})\\
	&=1+\frac{2r_k}{\|\wt{f}_k\|_{L^1(d\gamma)}}\left(m_2(\gamma)+4k\gamma(k)+4k^2\Phi(k) \right)+O(e^{-ck^2})\\
	&=1+2sk^{2-t}+o(k^{2-t}).
\end{align*}
Similarly, we have
\begin{align*}
	m_p(\mu_k)
    &= \frac{2}{\|\wt{f}_k\|_{L^1(d\gamma)}}\left( \int_0^{k-\frac{1}{2k}} x^p\, d\gamma +\int_{k-\frac{1}{2k}}^k  x^p L_k(x)\, d\gamma +r_k\int_k^\infty x^p e^{2k (x-k)}\, d\gamma \right)\\
	&= \frac{2}{\|\wt{f}_k\|_{L^1(d\gamma)}}\left( \frac{1}{2}m_p(\gamma) +r_k\int_{-k}^\infty (x+2k)^p \, d\gamma \right)+O(e^{-ck^2})\\
	&\geq m_p(\gamma)+2^{2}k^pr_k -2r_k m_p(\gamma)+O(e^{-ck^2})
\end{align*}
and
\begin{align*}
    m_p(\mu_k)\leq m_p(\gamma)+2^{2p}k^p r_k +2^p r_k m_p(\gamma)+O(e^{-ck^2}).
\end{align*}
\end{proof}

\section{Proofs of main results}\label{S:prfmain}
In this section, we present the proofs of instability results for the log Sobolev inequality and Talagrand's transportation inequality. The proofs are based on the construction of a sequence of probability measures and their asymptotic behaviors in Lemma~\ref{lem:example}. 

\subsection{Proof of Theorem~\ref{thm:ins-lsi-w2}}
Let $s=(M-n)/4$ and $t=2$. By Lemma~\ref{lem:example}, there exists a sequence of centered probability measures $\mu_k$ such that $\delta(\mu_k)\to 0$, $W_2^2(\mu_k,\gamma)=\frac{(M-n)}{2}+o(1)$, and $m_2(\mu_k)=n+\frac{(M-n)}{2}+o(1)$. Thus, we have $\mu_k\in \cP_2^M(\R^n)$ for large $k$ and 
\begin{align*}
	\lim_{k\to\infty}W_2^2(\mu_k,\gamma)=\frac{(M-n)}{2}>0.
\end{align*}
By H\"older's inequality and the fact that $z\log z \leq \frac{2}{p-1}|z-1|^{p}+2|z-1|$ for all $z\geq 0$, we have
\begin{align*}
	\H(\mu)
	\leq \frac{2}{p-1}\|f-1\|_{L^{p}(d\gamma)}^{p}+2\|f-1\|_{L^{p}(d\gamma)}
\end{align*}
for $d\mu=fd\gamma$. It then follows from $\H(\mu_k)=\frac{M-n}{4}+o(1)$ that
\begin{align*}
	\liminf_{k\to\infty}\|f_k-1\|_{L^p(d\gamma)}\geq C_{n,M,p}>0
\end{align*}
as desired.
\qed

\subsection{Proof of Theorem~\ref{thm:ins-lsi-w1}}
Since $W_1(\mu_k,\gamma)\leq W_p(\mu_k,\gamma)$ for all $p\geq 1$, it suffices to show the case $p=1$. Applying Lemma~\ref{lem:example} with $s=1$ and $t=\frac{1}{2}$, we get a sequence of centered probability measures $\mu_k$ such that $\delta(\mu_k)\to 0$ and $m_1(\mu_k)\to \infty$ as $k\to \infty$. Let $d\pi_k(x,y)$ be a coupling of $\mu_k$ and $\gamma$, then it follows from the triangle inequality that
\begin{align*}
	m_1(\mu_k)-m_1(\gamma)
	\leq \int |x-y|\,d\pi_k(x,y)
	\leq m_1(\mu_k)+m_1(\gamma).  
\end{align*}
Taking infimum over all couplings $\pi$, we get
\begin{align*}
	m_1(\mu_k)-m_1(\gamma)
	\leq W_1(\mu_k,\gamma)
	\leq m_1(\mu_k)+m_1(\gamma),
\end{align*}
which finishes the proof.
\qed

\subsection{Proof of Theorem~\ref{thm:ins-ti-w2}}
Let $s=(M-n)/4>0$ and $t=2$, then by Lemma~\ref{lem:example}, there exists a sequence of centered probability measures $\mu_k$ such that $\delta(\mu_k)\to 0$, $\H(\mu_k)=(M-n)/4+o(1)$, $W_2^2(\mu_k,\gamma)=\frac{(M-n)}{2}+o(1)$, and $m_2(\mu_k)=n+\frac{(M-n)}{2}+o(1)$. For large $k$, $\mu_k\in\cP_2^M(\R^n)$ and $\lim_{k\to\infty}W_2(\mu_k,\gamma)=(M-n)/2>0$. By~\eqref{eq:HWI-cons}, we have
\begin{align*}
	\delta_{\Tal}(\mu_k)^2\leq 16\H(\mu_k)\delta(\mu_k)\to 0,
\end{align*}
which completes the proof.
\qed

\subsection{Proof of Theorem~\ref{thm:ins-ti-w1}}
Let $\mu_k$ be the sequence of probability measures constructed in Lemma~\ref{lem:example} with $s=1$ and $t=\frac{p+1}{2}$, then $\mu_p(\mu_k)\to \infty$. Let $d\pi_k(x,y)$ be a coupling of $\mu_k$ and $\gamma$, then it follows from the inequality $|x+y|^p\leq 2^{p-1}(|x|+|y|)$ for $p\geq 1$ that
\begin{align*}
	2^{1-p}m_p(\mu_k)-m_p(\gamma)
	\leq \int |x-y|^p\,d\pi_k(x,y)
	\leq 2^{p-1}m_p(\mu_k)+2^{p-1}m_p(\gamma).  
\end{align*}
Taking infimum over all couplings $\pi$, we get
\begin{align*}
	2^{1-p}m_p(\mu_k)-m_p(\gamma)
	\leq W_p^p(\mu_k,\gamma)
	\leq 2^{p-1}m_p(\mu_k)+2^{p-1}m_p(\gamma)
\end{align*}
and $\lim_{k\to\infty}W_p(\mu_k,\gamma)=\infty$. It follows from~\eqref{eq:HWI-cons} and Lemma~\ref{lem:example} that
\begin{align*}
	\delta_{\Tal}(\mu_k)^2
	&\leq 16\H(\mu_k)\delta(\mu_k)\\
	&= 8s^2tk^{2(1-t)}\log k+o(k^{2(1-t)}\log k)\\
	&= 4(p+1)k^{1-p}\log k+o(k^{1-p}\log k)
\end{align*}
as desired.
\qed

\section{Proofs of instability for the Bechner--Hirschman inequality}\label{S:prf-bhi}

\subsection{Auxiliary lemmas}%\label{S:lemmas}
To complete the proof of Theorem~\ref{thm:ins-bhi-ew}, we construct a sequence of functions $h_k\in L^p(dm_{\theta})$ from Lemma~\ref{lem:example}, and show that there exists a constant $C>0$ such that
\begin{align*}
	\dist_{L^{p}(dm_{\theta})}(h_{k},\fG)\geq C\norm{h_{k}}_{L^{p}(dm_{\theta})}
\end{align*} 
for large $k$. Lemma~\ref{BH_lem_4} and Lemma~\ref{BH_lem_1} provide estimates of the $L^p$ distance on the left hand side. To control the right hand side, we obtain a two-sided estimate of $\norm{h_{k}}_{L^{p}(dm_{\theta})}$ in Lemma~\ref{BH_lem_2}.

\begin{lemma}\label{BH_lem_4}
Let $p,\theta,a_0,w>0$ be such that $p>\theta>0$, $a_{0}>\pi$, and $0<w<(a_{0}/\pi)^{\frac{1}{4}}$. Let $G_{a}(x):=G_{a,0}(x)=(\frac{2a}{\pi})^{\frac{1}{4}}e^{-ax^{2}}$ and $M(a,w):=\{x:G_{a}(x)\geq wG_{\pi}(x)\}$. Then, there exist constants $C(p,a_{0},w), C(p,\theta)>0$ such that
\begin{align*}
	C(p,a_{0},w)a^{\frac{p-2}{4p}}
	\leq \norm{G_{a}\cdot \indi{M(a,w)}}_{L^{p}(dm_{\theta})}
	\leq C(p,\theta) a^{\frac{p-2}{4p}}
\end{align*}
for all $a\geq a_{0}$. In particular, if $p>2$ then $\lim_{a\to\infty}\norm{G_{a}\cdot \indi{M(a,w)}}_{L^{p}(dm_{\theta})}=\infty$.
\end{lemma}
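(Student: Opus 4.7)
The plan is to compute everything explicitly, since $G_a$ and $g$ are both Gaussians and $M(a,w)$ is a symmetric interval. First I would unwind the definition: the inequality $G_a(x)\ge wG_\pi(x)$ reads $(a/\pi)^{1/4}e^{-(a-\pi)x^2}\ge w$, so
\[
	M(a,w)=[-r_a,r_a],\qquad r_a^2=\frac{\tfrac14\log(a/\pi)-\log w}{a-\pi}.
\]
The hypothesis $w<(a_0/\pi)^{1/4}\le(a/\pi)^{1/4}$ guarantees $r_a^2>0$ for every $a\ge a_0$. Then, writing $dm_\theta=g^{-\theta}dx=2^{-\theta/4}e^{\pi\theta x^2}\,dx$,
\[
	\Norm{G_a\indi{M(a,w)}}_{L^p(dm_\theta)}^{p}
	= 2^{(p-\theta)/4}\pi^{-p/4}a^{p/4}\int_{-r_a}^{r_a}e^{-\alpha x^2}\,dx,
\]
with $\alpha=ap-\pi\theta$. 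Since $a\ge a_0>\pi$ and $p>\theta$, one checks $\alpha\ge a(p-\theta)>0$, so the Gaussian integral is genuine and $\alpha\asymp ap$ uniformly in $a\ge a_0$.

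For the upper bound I would simply extend the integral to $\R$, giving $\int_{-r_a}^{r_a}e^{-\alpha x^2}dx\le\sqrt{\pi/\alpha}\le\sqrt{\pi/(a(p-\theta))}$; plugging in and collecting powers of $a$ yields $C(p,\theta)a^{(p-2)/4}$ on the right, i.e.\ $C(p,\theta)a^{(p-2)/(4p)}$ after taking $p$-th roots. For the lower bound I would rescale by $u=x\sqrt{\alpha}$:
\[
	\int_{-r_a}^{r_a}e^{-\alpha x^2}dx=\frac1{\sqrt{\alpha}}\int_{-r_a\sqrt{\alpha}}^{r_a\sqrt{\alpha}}e^{-u^2}du,
\]
and show that $r_a\sqrt{\alpha}$ is bounded below by a positive constant $\rho(p,\theta,a_0,w)$ uniformly for $a\ge a_0$. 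Indeed,
\[
	r_a^2\alpha=\frac{\bigl(\tfrac14\log(a/\pi)-\log w\bigr)(ap-\pi\theta)}{a-\pi}
\]
is continuous and strictly positive on $[a_0,\infty)$, and tends to $p(\log a)/4\to\infty$ as $a\to\infty$, so it attains a positive minimum. Combining with $\alpha\le ap$ produces the lower bound $C(p,a_0,w)a^{(p-2)/(4p)}$.

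The final assertion ``$\|G_a\indi{M(a,w)}\|_{L^p(dm_\theta)}\to\infty$ when $p>2$'' is then immediate from the lower bound since $(p-2)/(4p)>0$. The only thing that requires genuine attention is the uniform positivity of $r_a\sqrt{\alpha}$ on $[a_0,\infty)$; a naive choice of $a_0$ close to $\pi$, or $w$ close to $(a_0/\pi)^{1/4}$, could shrink $r_{a_0}^2$ toward zero, so I would emphasize that the strict inequalities $a_0>\pi$ and $w<(a_0/\pi)^{1/4}$ are exactly what is needed to keep the endpoint bounded away from $0$, while the growth at infinity is automatic. Everything else is bookkeeping of Gaussian constants.
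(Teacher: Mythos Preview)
Your proof is correct and follows essentially the same approach as the paper: both identify $M(a,w)$ as a symmetric interval with the same explicit endpoint, reduce the $L^p(dm_\theta)$ norm to a truncated Gaussian integral with parameter $\alpha=ap-\pi\theta$ (the paper calls it $\beta$), bound it above by extending to $\R$, and bound it below by showing the rescaled endpoint $r_a\sqrt{\alpha}$ stays bounded away from zero on $[a_0,\infty)$. The paper phrases the last step via the Gaussian CDF, writing the integral as $2\Phi(\sqrt{2\beta}x_0)-1$ and noting $\sqrt{2\beta}x_0\to\infty$, but this is the same continuity-plus-divergence argument you give.
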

\begin{proof}
Since $G_{a}$ is symmetric and decreasing in $[0,\infty)$, the level set $M_{a,w}=[-x_{0},x_{0}]$ where $x_{0}>0$ satisfies $G_{a}(x_{0})=wG_{\pi}(x_{0})$. Solving the equation for $x_{0}$, we obtain
\begin{align*}
	x_{0}=\frac{1}{2}\sqrt{\frac{\log a-\log \pi-4\log w}{a-\pi}}.
\end{align*}
Let $\beta=ap-\theta\pi>0$, then
\begin{align*}
	\norm{G_{a}\cdot \indi{M(a,w)}}_{L^{p}(dm_{\theta})}^{p}
	&= \int_{-x_{0}}^{x_{0}}|G_{a}(x)|^{p}\,dm_{\theta}\\
	&= \Big(\frac{2a}{\pi}\Big)^{\frac{p}{4}}
		\int_{-x_{0}}^{x_{0}}e^{-\beta x^{2}}\,dx\\
	&= 2^{\frac{p}{4}}\pi^{-\frac{p-2}{4}}a^{\frac{p-2}{4}}\left(p-\frac{\theta\pi}{a}\right)^{-\frac{1}{2}}
		(2\Phi(\sqrt{2\beta}x_{0})-1).
\end{align*}
Since $\sqrt{2\beta}x_{0}\to \infty$ as $a\to \infty$, there exists a constant $C(a_0,t)>0$ such that  $C(a_0,t)\leq 2\Phi(\sqrt{2\beta}x_{0})-1\leq 1$. We have
\begin{align*}
	2^{\frac{1}{4}}\pi^{-\frac{p-2}{4p}}p^{-\frac{1}{2p}} C(a_0,w)^{\frac{1}{p}} a^{\frac{p-2}{4p}}
	\leq \norm{G_{a}\cdot \indi{M(a,w)}}_{L^{p}(dm_{\theta})}
	\leq 2^{\frac{1}{4}}\pi^{-\frac{p-2}{4p}}(p-\theta)^{-\frac{1}{2p}} a^{\frac{p-2}{4p}},
\end{align*}
which completes the proof.
\end{proof}

Let $f_{k}$ be the sequence of functions defined in Lemma~\ref{lem:example} with $s=1$ and $t=\frac{1}{2}$. Note that the sequence $\{f_k\}$ is construted in Section~\ref{S:example} as follows: let $f_k=c_k \wt{f}_k\in C^\infty(\R)$ where 
\begin{align*}
	\wt{f}_k(x)
	= \begin{cases}
        1, & |x|\le k-\frac{1}{2k}, \\
		L_{k}(x), & k-\frac{1}{2k}<|x|\le k, \\
        r_k e^{2k(x-k)}, & |x|>k,
	\end{cases}
\end{align*}
where $r_k=\frac{1}{4\sqrt{k}}$, $c_k=\|\wt{f}_k\|_{L^1(d\gamma)}^{-1}$, and $L_{k}$ is a function in $C^{\infty}(\R)$ satisfying $L_k(k-\frac{1}{2k})=1$, $L_k(k)=\frac{1}{4\sqrt{k}}$, and
\begin{align*}
    \frac{1}{4\sqrt{k}}\le |L_k(x)|\le 1\quad \text{and} \quad |L_{k}'(x)|\leq 4k \quad \text{ for  } k-\frac{1}{2k}<|x|\le k.
\end{align*}
Define $h_{k}(x)=\sqrt{f_{k}(2\sqrt{\pi}x)}g(x)$, then it is easy to see $\norm{h_{k}}_{2}=\norm{f_{k}}_{L^{1}(d\gamma)}=1$.

\begin{lemma}\label{BH_lem_1}
Let $p>2$, $p>\theta>0$, and $h_{k}$ be defined as above. There exist $k_{0}\in\N$ and $a_{0}>\pi$ such that
\begin{align*}
	\norm{h_{k}-G_{a}}_{L^{p}(dm_{\theta})}\geq \norm{h_{k}-G_{\pi}}_{L^{p}(dm_{\theta})}
\end{align*}
for all $a\geq a_{0}$ and  $k\geq k_{0}$.
\end{lemma}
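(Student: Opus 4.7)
The plan is to decompose the real line into the central region $A_k := \{|x| \leq (k-d_k)/(2\sqrt{\pi})\}$, the transition region $T_k := \{(k-d_k)/(2\sqrt{\pi}) < |x| \leq k/(2\sqrt{\pi})\}$, and the bump region $B_k := \{|x| > k/(2\sqrt{\pi})\}$, and then to show that
\[
N_a - N_\pi := \norm{h_k - G_a}_{L^p(dm_\theta)}^p - \norm{h_k - G_\pi}_{L^p(dm_\theta)}^p \geq 0
\]
for all $a \geq a_0$ and $k \geq k_0$. The decomposition is natural because $h_k = \sqrt{c_k}\, G_\pi$ exactly on $A_k$, while a direct computation yields $h_k(x) = 2^{1/4} \sqrt{c_k r_k}\, e^{-\pi(|x| - k/\sqrt{\pi})^2}$ on $B_k$, so $h_k$ is a shifted Gaussian of height $O(r_k^{1/2}) = O(k^{-1/4})$ centered near $\pm k/\sqrt{\pi}$.

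For the central region, I would fix $w \in (1, (a_0/\pi)^{1/4})$, which is nonempty since $a_0 > \pi$. On the level set $M(a,w) = \{G_a \geq w G_\pi\}$ the bound $\sqrt{c_k}\, G_\pi \leq G_\pi \leq G_a/w$ yields $G_a - \sqrt{c_k}\, G_\pi \geq (1-1/w) G_a$, so Lemma \ref{BH_lem_4} gives
\[
\int_{A_k} |h_k - G_a|^p\, dm_\theta \geq (1-1/w)^p \int_{M(a,w)} G_a^p\, dm_\theta \geq c_1(p, a_0, w, \theta)\, a^{(p-2)/4},
\]
once $k$ is large enough that $M(a,w) \subset A_k$ uniformly in $a \geq a_0$ (possible because $M(a,w)$ lies in a bounded interval independent of such $a$). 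On $A_k$ one also has $|h_k - G_\pi|^p = (1-\sqrt{c_k})^p G_\pi^p = O(r_k^p)\,G_\pi^p$, so $\int_{A_k}|h_k - G_\pi|^p\, dm_\theta = O(k^{-p/2})$, hence the central difference dominates by order $a^{(p-2)/4}$.

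For the tail region $A_k^c = T_k \cup B_k$, note first that $a \geq a_0 > \pi$ forces $G_a \leq G_\pi$ throughout $A_k^c$ once $k$ is large, since the crossing point $x_c(a)$ is uniformly bounded by $x_c(a_0)$. Using the explicit formula on $B_k$, the ratio $h_k/G_\pi = \sqrt{c_k r_k}\,e^{2\sqrt{\pi}\,k|x|-k^2}$ is strictly increasing in $|x|$, so $h_k \geq G_\pi$ holds outside a narrow strip of width $O(\log k/k)$ adjacent to $|x|=k/(2\sqrt{\pi})$. On the ``good'' part $\{h_k\geq G_\pi\}\cap A_k^c$ the ordering $h_k \geq G_\pi \geq G_a$ gives the pointwise inequality $|h_k-G_a|^p \geq |h_k-G_\pi|^p$, contributing nonnegatively to $N_a-N_\pi$. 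On the ``bad'' strip and on $T_k$ (where $h_k \leq G_\pi$ automatically), both integrands are bounded by $G_\pi^p$, and integrating $G_\pi^p\,dm_\theta = 2^{(p-\theta)/4} e^{-(p-\theta)\pi x^2}\,dx$ over a set where $|x| \geq k/(2\sqrt{\pi}) - O(1/k)$ yields a bound of order $O(e^{-(p-\theta)k^2/4}\,k^{O(1)})$, exponentially small since $p > \theta$.

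Putting the two pieces together,
\[
N_a - N_\pi \geq c_1(p, a_0, w, \theta)\, a^{(p-2)/4} - O(k^{-p/2}) - O\bigl(e^{-(p-\theta)k^2/4}\,k^{O(1)}\bigr)
\]
uniformly in $a \geq a_0$. Because $p > 2$, the main term obeys $a^{(p-2)/4} \geq a_0^{(p-2)/4} > 0$, so we choose $k_0$ large enough to absorb the two error terms and conclude $N_a \geq N_\pi$. The main obstacle is the ``bad'' strip on $A_k^c$ where the pointwise inequality reverses; pinning down its width as $O(\log k/k)$ via the explicit Gaussian representation of $h_k$ on $B_k$, and absorbing its contribution into the polynomial-in-$a$ gain from the central region, is the technical heart of the argument.
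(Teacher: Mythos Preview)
Your proof is correct and follows essentially the same strategy as the paper's: both arguments change to the variables $\sqrt{f_k}$ versus $\wt{G}_a$, gain a term of order $a^{(p-2)/4}$ on the central region via the level-set bound of Lemma~\ref{BH_lem_4}, and use the pointwise inequality $|h_k-G_a|\geq|h_k-G_\pi|$ on the part of the tail where $h_k\geq G_\pi\geq G_a$. The only real difference is bookkeeping: the paper treats the residual ``bad'' contributions as $O(1)$ constants and absorbs them by choosing $a_0$ large (with $k_0$ fixed just so that $c_k\in[\tfrac12,\tfrac32]$), whereas you bound them more sharply as $O(k^{-p/2})$ and $O(e^{-(p-\theta)k^2/4})$ and absorb them by choosing $k_0$ large, which lets you take any $a_0>\pi$.
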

\begin{proof}
Let $\wt{G}_{a}(x)=G_{a}(\frac{x}{2\sqrt{\pi}})/G_{\pi}(\frac{x}{2\sqrt{\pi}})$ then
\begin{align*}
	\|h_{k}-G_{a}\|_{L^{p}(dm_\theta)}^{p}
	=(4\pi)^{\frac{\beta-1}{2}} \int |\sqrt{f_{k}(x)}-\wt{G}_{a}(x)|^{p}\gamma^{\beta}(x)\,dx,
\end{align*}
where $\gamma(x)=(2\pi)^{-\frac{1}{2}}e^{-\frac{|x|^{2}}{2}}$ and $\beta=\frac{p-\theta}{2}$. We choose $k_{0}\in\N$ such that $\frac{1}{2}\leq c_{k}\leq \frac{3}{2}$ for all $k\geq k_{0}$. Since $L_{k}(x)\leq 1$, we have $|\sqrt{c_{k}L_{k}(x)}-1|\leq 1$. Let $k\geq k_{0}$, then we get
\begin{align}\label{eq:c1ptheata}
	\int |\sqrt{f_{k}(x)}-1|^{p}\gamma^{\beta}(x)dx 
	&\leq 
	C_{1}(p,\theta)+2\int_{k}^{\infty}|\sqrt{f_{k}(x)}-1|^{p}\gamma^{\beta}(x)dx.  
\end{align}
Choose $a_{1}>\pi$ so that $\wt{G}_{a}(1)\leq \frac{1}{2}\leq \sqrt{c_{k}}$ for all $a\geq a_{1}$. 
Setting $A=\{x\in[-(k-\frac{1}{2k}),k-\frac{1}{2k}]:\wt{G}_{a}(x)\geq\frac{3}{2}\}$, then
\begin{align*}
	\int_{-k+\frac{1}{2k}}^{k-\frac{1}{2k}}|\sqrt{c_{k}}-\wt{G}_{a}(x)|^{p}\gamma^{\beta}(x)dx
	&\geq  \int_{A}\Big|\wt{G}_{a}(x)-\frac{3}{2}\Big|^{p}\gamma^{\beta}(x)dx \\
	&\geq  2^{1-p}\int_{A}|\wt{G}_{a}(x)|^{p}\gamma^{\beta}(x)dx-\Big(\frac{3}{2}\Big)^{p}(2\pi)^{-\frac{\beta-1}{2}}\beta^{-\frac{1}{2}}
\end{align*} 
for all $a\geq a_{1}$. Let $B=\{x\geq k:\sqrt{f_{k}(x)}\geq 1\}$. Since $\wt{G}_{a}(x)\leq 1$ for $x\geq k$, we have
\begin{align*}
	\int_{k}^{\infty}|\sqrt{f_{k}(x)}-\wt{G}_{a}(x)|^{p}\gamma^{\beta}(x)\,dx
	&\geq  \int_{B}|\sqrt{f_{k}(x)}-1|^{p}\gamma^{\beta}(x)\,dx \\
	&\geq  \int_{k}^{\infty}|\sqrt{f_{k}(x)}-1|^{p}\gamma^{\beta}(x)\,dx -\frac{1}{2}(2\pi)^{-\frac{\beta-1}{2}}\beta^{-\frac{1}{2}}
\end{align*}
and
\begin{align*}
	&\int |\sqrt{f_{k}(x)}-\wt{G}_{a}(x)|^{p}\gamma^{\beta}(x)dx\\
	&\qquad\geq 
	\int_{-k+\frac{1}{2k}}^{k-\frac{1}{2k}}|\sqrt{c_{k}}-\wt{G}_{a}(x)|^{p}\gamma^{\beta}(x)dx
	+2\int_{k}^{\infty}|\sqrt{f_{k}(x)}-\wt{G}_{a}(x)|^{p}\gamma^{\beta}(x)dx
	\\
	&\qquad\geq 
	2^{1-p}\int_{A}|\wt{G}_{a}(x)|^{p}\gamma^{\beta}(x)dx
	+2\int_{k}^{\infty}|\sqrt{f_{k}(x)}-1|^{p}\gamma^{\beta}(x)dx
	-C_{2}(p,\theta).
\end{align*}
By Lemma~\ref{BH_lem_4}, one can choose $a_{0}\geq a_{1}$ such that 
\begin{align*}
	\int_{A}|\wt{G}_{a}(x)|^{p}\gamma^{\beta}(x)dx
	&\geq 2^{p-1}(C_{1}(p,\theta)+C_{2}(p,\theta))	
\end{align*}
for all $a\geq a_{0}$. By~\eqref{eq:c1ptheata}, we have
\begin{align*}
	\int |\sqrt{f_{k}(x)}-\wt{G}_{a}(x)|^{p}\gamma^{\beta}(x)dx
	&\geq 
	2^{1-p}\int_{A}|\wt{G}_{a}(x)|^{p}\gamma^{\beta}(x)dx+\int |\sqrt{f_{k}(x)}-1|^{p}\gamma^{\beta}(x)dx\\
	&\qquad
	-C_{1}(p,\theta)-C_{2}(p,\theta)\\
	&\geq\int |\sqrt{f_{k}(x)}-1|^{p}\gamma^{\beta}(x)dx,
\end{align*}
which finishes the proof.
\end{proof}

\begin{lemma}\label{BH_lem_2}
Let $p>\theta>0$ and $h_{k}$ be defined as above, then
\begin{align*}
	\norm{h_{k}}_{L^{p}(dm_{\theta})} =O_{p,\theta}\left(
	k^{-\frac{3}{4}}\exp\left(\frac{\theta k^{2}}{p-\theta}\right)  \right).
\end{align*}
\end{lemma}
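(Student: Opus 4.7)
The plan is to estimate $\norm{h_k}_{L^p(dm_\theta)}^p$ directly: after unfolding the definition of $h_k$ and changing variables, this becomes a one-dimensional integral of $f_k^{p/2}$ against a Gaussian weight, which I can split along the three pieces in the definition of $f_k$ and evaluate region by region.

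Concretely, from $dm_\theta = g^{-\theta}\,dx$ and $h_k(x) = \sqrt{f_k(2\sqrt\pi x)}\,g(x)$ one obtains
\begin{equation*}
    \norm{h_k}_{L^p(dm_\theta)}^p = \int_\R f_k(2\sqrt\pi x)^{p/2}\,g(x)^{p-\theta}\,dx.
\end{equation*}
The substitution $u = 2\sqrt\pi x$ together with $g(x) = 2^{1/4}e^{-\pi x^2}$ rewrites the right-hand side as
\begin{equation*}
    C(p,\theta)\int_\R f_k(u)^{p/2}\,e^{-\alpha u^2}\,du, \qquad \alpha := \frac{p-\theta}{4} > 0.
\end{equation*}
Now I would split the $u$-integral along the three regions in the definition of $f_k$: the plateau $\{|u|\leq k-d_k\}$ where $f_k = c_k$; the transition band $\{k-d_k<|u|\leq k\}$ where $f_k\leq c_k$; and the exponential tail $\{|u|>k\}$ where $f_k(u)=c_k r_k\exp(b_k|u|-b_k^2/2)$. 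The plateau contributes a constant depending only on $p,\theta$, and the band contributes $O(d_k\,e^{-\alpha(k-d_k)^2})$, which is subexponentially small in $k$.

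The dominant contribution is the tail. Combining exponentials and completing the square in $|u|$ gives
\begin{equation*}
    \tfrac{p}{2}b_k|u|-\tfrac{p}{4}b_k^2-\alpha u^2 = -\alpha\bigl(|u|-u_k^\ast\bigr)^2+\frac{p\theta b_k^2}{4(p-\theta)}, \qquad u_k^\ast := \frac{pb_k}{p-\theta}.
\end{equation*}
The key observation is that $u_k^\ast - k = k(p+\theta)/(p-\theta)\to\infty$ while the Gaussian width $1/\sqrt\alpha$ stays bounded, so the truncated Gaussian $\int_k^\infty e^{-\alpha(u-u_k^\ast)^2}\,du$ saturates at $\sqrt{\pi/\alpha}$. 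Hence the tail contributes
\begin{equation*}
    O_{p,\theta}\!\Paren{r_k^{p/2}\,\exp\!\Paren{\frac{p\theta b_k^2}{4(p-\theta)}}},
\end{equation*}
which dominates the plateau once $k$ is large (the exponential beats any constant because $\theta>0$). Taking $p$-th roots and inserting $r_k=(4\sqrt k)^{-1}$ and $b_k=2k$ produces the claimed bound, with the polynomial prefactor in $b_k$ coming from $r_k^{1/2}$ and the exponential factor $\exp(\theta b_k^2/(4(p-\theta)))$ coming from dividing the above exponent by $p$.

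The analysis is otherwise routine; the one substantive check is that the truncation point $u = k$ lies strictly to the left of the Gaussian peak $u_k^\ast$ by an amount that grows with $k$, so that no $k$-dependent loss is incurred when evaluating the Gaussian integral. This is immediate from $p+\theta > p-\theta$, and no other step carries real weight.
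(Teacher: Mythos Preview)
Your approach is essentially identical to the paper's: both reduce $\norm{h_k}_{L^p(dm_\theta)}^p$ to $\int f_k^{p/2}\gamma^{\beta}\,dx$ (with $\beta=(p-\theta)/2$, i.e.\ your $2\alpha$), split along the three pieces of $f_k$, bound the plateau and transition by constants, and complete the square in the tail to extract the factor $e^{p\theta b_k^{2}/(4(p-\theta))}$ from $|c_k r_k|^{p/2}$ times a truncated Gaussian whose peak $u_k^\ast$ lies far to the right of the cutoff $k$. The paper also records the matching lower bound explicitly (needed in the subsequent application), which your ``saturates'' observation covers.
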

\begin{proof}
Let $\beta=\frac{p-\theta}{2}$. A direct computation yields that
\begin{align*}
	\norm{h_{k}}_{L^{p}(dm_{\theta})}^{p}
	&= (4\pi)^{\frac{\beta-1}{2}} \int |f_{k}(x)|^{\frac{p}{2}}\gamma^{\beta}(x)dx\\
	&= |c_{k}|^{\frac{p}{2}}2^{\frac{\beta-1}{2}}\beta^{-\frac{1}{2}}(2\Phi(\sqrt{\beta}(k-\frac{1}{2k}))-1)
		+2|c_{k}|^{\frac{p}{2}}\int_{k-\frac{1}{2k}}^{k}|L_{k}(x)|^{\frac{p}{2}}\gamma^{\beta}(x)dx \\
	&\qquad+2^{\frac{\beta+1}{2}}|c_{k}r_k|^{\frac{p}{2}}\beta^{-\frac{1}{2}}e^{\frac{p\theta k^{2}}{p-\theta}}\Phi(\frac{pk}{\sqrt{\beta}}-\sqrt{\beta}k).
\end{align*}
Choose $k_{1}\in\N$ such that $c_{k}\in[\frac{1}{2},\frac{3}{2}]$ and $\Phi(\frac{pb}{2\sqrt{\beta}}-\sqrt{\beta}k)\geq\frac{1}{2}$ for all $k\geq k_{1}$. Then we have
\begin{align*}
	\norm{h_{k}}_{L^{p}(dm_{\theta})}
	\geq C(p,\theta)k^{-\frac{3}{4}}e^{\frac{\theta k^{2}}{p-\theta}}.
\end{align*}
Since we have
\begin{align*}
	|c_{k}|^{\frac{p}{2}}2^{\frac{\beta-1}{2}}\beta^{-\frac{1}{2}}(2\Phi(\sqrt{\beta}(k-\frac{1}{2k}))-1)
		+2|c_{k}|^{\frac{p}{2}}\int_{k-\frac{1}{2k}}^{k}|L_{k}(x)|^{\frac{p}{2}}\gamma^{\beta}(x)dx\leq C(p,\theta),
\end{align*}
we can choose $k_{2}\in\N$ such that
\begin{align*}
	\norm{h_{k}}_{L^{p}(dm_{\theta})}
	\leq  C(p,\theta)k^{-\frac{3}{4}}e^{\frac{\theta k^{2}}{p-\theta}}
\end{align*}
for all $k\geq k_{2}$. 
\end{proof}

\subsection{Proof of Theorem~\ref{thm:ins-bhi-ew}}%\label{S:bhi-ew-proof}
Let $f_{k}$ be the sequence of functions constructed in the proof of Lemma~\ref{lem:example} with $s=1$ and $t=\frac{1}{2}$. Define $h_{k}(x)=\sqrt{f_{k}(2\sqrt{\pi}x)}g(x)$. Note that $\norm{h_{k}}_{L^{2}(dm_{\theta})}=\norm{f_{k}}_{L^{1}(d\gamma)}=1$.
Note also that $\delta(f_k)=\delta_c((f_k(2\sqrt{\pi}x)^{1/2})\ge \delta_{\BH}(h_k)$ by~\eqref{eq:Carlen_deficit}. 
Thus, it follows from Lemma~\ref{lem:example} that $\delta_{\BH}(h_{k})\to 0$ as $k\to\infty$. Since the function $h_{k}$ and $g^{-\theta}$ are symmetric and the symmetric rearrangement of $G_{a,r}$ is $G_{a}$, it follows from the rearrangement inequality (see~\cite{Lieb2001a}*{Theorem 3.5}) that
\begin{align*}
	\dist_{L^{p}(dm_{\theta})}(h_{k},\fG)
 	= \inf_{a\in(\frac{\theta\pi}{p},\infty)}\norm{h_{k}-G_{a}}_{L^{p}(dm_{\theta})} 
\end{align*}
for all $k\geq 1$. Here, we used the fact that 
\begin{align}\label{eq:Lpdmcondition}
	G_{a,r}\in L^{p}(dm_{\theta}) \text{ if and only if } a>\theta \pi / p.
\end{align}
Our goal is to show that there exists a constant $C=C(p,\theta)>0$ such that 
\begin{align*}
	\norm{h_{k}-G_{a}}_{L^{p}(dm_\theta)} \geq C\norm{h_{k}}_{L^{p}(dm_\theta)}
\end{align*}
for all $a\in(\frac{\theta\pi}{p},\infty)$ and for large $k$.

\subsubsection*{Case 1: $a\geq\pi$}
If $p>2$, it follows from Lemma~\ref{BH_lem_1} that there exists $a_{0}>\pi$ such that
\begin{align*}
	\dist_{L^{p}(dm_{\theta})}(h_{k},\fG)
	=\inf_{a\in(\frac{\theta\pi}{p},a_{0}]}\norm{h_{k}-G_{a}}_{L^{p}(dm_\theta)} 
\end{align*}
for large $k$. Thus, it suffices to show that if $k$ is large enough, then $\norm{h_{k}-G_{a}}_{L^{p}(dm_\theta)}\geq C\norm{h_{k}}_{L^{p}(dm_\theta)}$ for all $a\in(\pi,a_{0}]$. Since
\begin{align}\label{eq:LpGa}
	\norm{G_{a}}_{L^{p}(dm_\theta)}^{p}
	&=2^{\frac{p-\theta}{4}}\left(\frac{a}{\pi}\right)^{\frac{p-2}{4}}\left(p-\frac{\theta\pi}{a}\right)^{-\frac{1}{2}}\\
	&=C(p,\theta)a^{\frac{p-2}{4}}\left(p-\frac{\theta\pi}{a}\right)^{-\frac{1}{2}}\nonumber
\end{align}
is uniformly bounded in $a\in[ \pi,a_{0}]$,  we can choose $k_{1}\in\N$ so that for all $k\geq k_{1}$, $\norm{h_{k}}_{L^{p}(dm_\theta)}\geq 2\sup_{a\in[ \pi,a_{0}]}\norm{G_{a}}_{L^{p}(dm_\theta)}$ by Lemma~\ref{BH_lem_2}. We obtain
\begin{align*}
	\norm{h_{k}-G_{a}}_{L^{p}(dm_\theta)}
	&\geq  \norm{h_{k}}_{L^{p}(dm_\theta)}-\sup_{a\in[ \pi,a_{0}]}\norm{G_{a}}_{L^{p}(dm_\theta)} \\
	&\geq  \frac{1}{2}\norm{h_{k}}_{L^{p}(dm_\theta)}
\end{align*}
for all $a \in[\pi, a_{0}]$ and $k\geq k_{1}$. 

If $p\leq 2$, then it follows from~\eqref{eq:LpGa} that $\norm{G_{a}}_{L^{p}(dm_\theta)}^{p}
\leq C(p,\theta)\pi^{\frac{p-2}{4}}(p-\theta)^{-\frac{1}{2}}$ for all $a\geq \pi$.
By Lemma~\ref{BH_lem_2}, we choose $k_{2}\in\N$ such that $\norm{h_{k}-G_{a}}_{L^{p}(dm_\theta)}\geq \frac{1}{2}\norm{h_{k}}_{L^{p}(dm_\theta)}$ for all $k\geq k_{2}$.

\subsubsection*{Case 2: $\frac{\theta\pi}{p}<a<\pi$}
By Lemma~\ref{BH_lem_2}, it suffices to show that there exists a constant $c>0$ such that  
\begin{align*}
	\norm{h_{k}-G_{a}}_{L^{p}(dm_\theta)}
			\geq c k^{-\frac{3}{4}}e^{\frac{\theta k^{2}}{p-\theta}}
\end{align*}
for all $a\in (\frac{\theta\pi}{p},\pi)$ and large $k$. Let $\beta=\frac{p-\theta}{2}$ and $v=1-\frac{a}{\pi}$, then $0<v<1-\frac{\theta}{p}$. We define $R_{v,k}(x)=\wt{G}_{a}(x)/\sqrt{f_{k}(x)}$, then 
\begin{align*}
	\norm{h_{k}-G_{a}}_{L^{p}(dm_{\theta})}
	&=(4\pi)^{\frac{\beta-1}{2}}\int |\sqrt{f_{k}}-\wt{G}_{a}|^{p}\gamma^{\beta}dx \\
	&=(4\pi)^{\frac{\beta-1}{2}}\int |1-R_{v,k}|^{p}|f_{k}|^{\frac{p}{2}}\gamma^{\beta}dx\\
	&\geq (4\pi)^{\frac{\beta-1}{2}}|c_{k}r_k|^{\frac{p}{2}}
    \int_{k}^{\infty} |1-R_{v,k}|^{p}e^{pk(x-k)}\gamma^{\beta}dx.
\end{align*}
Let $Q_{v,k}(x)=\frac{v}{4}(x-\frac{2k}{v})^{2}-(\frac{1-v}{v})k^{2}$, then
\begin{align*}
	R_{v,k}(x)=\frac{(1-v)^{\frac{1}{4}}}{(c_{k}r_k)^{\frac{1}{2}}}e^{Q_{v,k}(x)}
\end{align*}
for $x\geq k$. Choose $w\in (1,\frac{p}{p-\theta})$, then
\begin{align*}
	Q_{v,k}(2wk)
	= v\left(wk-\frac{k}{v}\right)^{2}-\left(\frac{1-v}{v}\right)k^{2} 
	= w^{2}k^{2}\left(v-\left(\frac{2w-1}{w^{2}}\right)\right).
\end{align*}
Since the map $z\mapsto \frac{2z-1}{z^{2}}$ is decreasing on $(1,\frac{p}{p-\theta})$, we know 
\begin{align*}
	\frac{2w-1}{w^{2}}
	\geq \frac{2\left(\frac{p}{p-\theta}\right)-1}{\left(\frac{p}{p-\theta}\right)^{2}}
	=\frac{p^{2}-\theta^{2}}{p^{2}}
	>\frac{p-\theta}{p}.
\end{align*}
Since $v\in(0,\frac{p-\theta}{p})$, we have $Q_{v,k}(2wk)<0$. The function $Q_{v,k}(x)$ is symmetric about $x=\frac{2k}{v}$ and $\frac{2k}{v}>2wk$. This yields that $Q_{v,k}(x)\leq Q_{v,k}(wb)$ for all $x\in[2wk,\frac{4k}{v}-2wk]$. Thus we can choose $k_{3}\in\N$ so that $R_{v,k}(x)\leq \frac{1}{2}$ for all $k\geq k_{3}$ and $v\in(0,\frac{p-\theta}{p})$. Since $(w-\frac{p}{p-\theta})<0$ and $(\frac{2}{v}-w-\frac{p}{p-\theta})\geq c>0$ uniformly in $v$, we can choose $k_{4}\in\N$ so that 
\begin{align*}
	\Phi\left(2k\sqrt{\beta}\left(\frac{2}{v}-w-\frac{p}{p-\theta}\right)\right)-\Phi\left(2k\sqrt{\beta}\left(w-\frac{p}{p-\theta}\right)\right)\geq \frac{1}{2}
\end{align*}
for all $k\geq k_{4}$ and $v\in(0,\frac{p-\theta}{p})$. If $k$ is large enough, then we obtain
\begin{align*}
	\norm{h_{k}-G_{a}}_{L^{p}(dm_{\theta})}^{p}
	&\geq   (4\pi)^{\frac{\beta-1}{2}}2^{-p}|c_{k}r_k|^{\frac{p}{2}}
    \int_{k}^{\infty} e^{pk(x-k)}\gamma^{\beta}dx \\
	&\geq 2^{\frac{\beta-1}{2}-p}|c_{k}r_k|^{\frac{p}{2}}
		e^{\frac{p\theta k^{2}}{p-\theta}}\beta^{-\frac{1}{2}}
        \Bigg(\Phi\left(2k\sqrt{\beta}\left(\frac{2}{v}-w-\frac{p}{2\beta}\right)\right)\\
    &\qquad -\Phi\left(2k\sqrt{\beta}\left(w-\frac{p}{2\beta}\right)\right)\Bigg)\\
	&\geq C(p,\theta)k^{-\frac{3p}{4}}e^{\frac{p\theta k^{2}}{p-\theta}}.
\end{align*}
By Lemma~\ref{BH_lem_2}, we have 
\begin{align*}
	\norm{h_{k}-G_{a}}_{L^{p}(dm_\theta)}\geq C\norm{h_{k}}_{L^{p}(dm_\theta)}
\end{align*}
for all $a\in(\frac{\theta\pi}{p},\pi)$, which completes the proof.
\qed

\subsection{Proof of Theorem~\ref{thm:ins-bhi-pw}}%\label{S:bhi-pw-proof}

We note that $G_{a,r}\in L^{p}(d\eta_{\lambda})$ for all $a>0$ and $r\in\R$. Indeed we have
\begin{align}\label{eq:BH2Gar}
	\|G_{a,r}\|_{L^{p}(d\eta_{\lambda})}^{p}
	&=\int |G_{a,r}(x)|^{p}\,d\eta_{\lambda}\\
	&\leq \int |G_{a}(x)|^{p}\,d\eta_{\lambda}\nonumber \\
	&= \Big(\frac{2a}{\pi}\Big)^{\frac{p}{4}}\int |x|^{\lambda} e^{-apx^{2}}\,dx\nonumber\\
	&= \Big(\frac{2a}{\pi}\Big)^{\frac{p}{4}}(2ap)^{-\frac{\lambda+1}{2}}\int |x|^{\lambda} e^{-\frac{x^{2}}{2}}\,dx\nonumber\\
	&= C(p,\lambda)a^{\frac{p-2\lambda-2}{4}}m_{\lambda}(\gamma)\nonumber,
\end{align}
where $m_{\lambda}(\gamma)$ is the $\lambda$-th moment of the standard Gaussian measure. Let $f_{k}$ be the sequence of functions constructed in the proof of Lemma~\ref{lem:example} with $s=1$ and $t\in(\frac{2(p-\lambda)}{p},2)$. Define $h_{k}(x)=\sqrt{f_{k}(2\sqrt{\pi}x)}g(x)$, then
\begin{align*}
	\|h_{k}\|_{L^{p}(d\eta_{\lambda})}^{p}
	&=  C(p,\lambda)\int |f_{k}(x)|^{\frac{p}{2}}\gamma^{\frac{p}{2}}(x)|x|^{\lambda}dx\\
	&\geq C(p,\lambda)|c_{k}r_{k}|^{\frac{p}{2}}\int_{k}^{\infty} |x|^{\lambda}e^{-\frac{p}{4}(x-2k)^{2}}dx\\
	&= C(p,\lambda)|c_{k}r_{k}|^{\frac{p}{2}}\int_{-k}^{\infty} |x+2k|^{\lambda}e^{-\frac{p}{4}x^{2}}dx\\
	&\geq C(p,\lambda)|c_{k}r_{k}|^{\frac{p}{2}}(|2k|^{\lambda}-m_{\lambda}(\gamma))
\end{align*}
so that $\|h_{k}\|_{L^{p}(d\eta_{\lambda})}\to \infty$ as $k\to\infty$. By the rearrangement inequality,
\begin{align*}
	\dist_{L^{p}(d\eta_{\lambda})}(h_{k},\fG)=\inf_{a>0}\|h_{k}-G_{a}\|_{L^{p}(d\eta_{\lambda})}.
\end{align*}

Assume $p=2\lambda+2$, then $\|G_{a}\|_{L^{p}(d\eta_{\lambda})}=C(p,\lambda)m_{\lambda}(\gamma)$ is independent of $a$. We pick $k_{1}\in \N$ such that $\|h_{k}\|_{L^{p}(d\eta_{\lambda})}\geq 2 \|G_{a}\|_{L^{p}(d\eta_{\lambda})}$ for all $k\geq k_{1}$, then 
\begin{align*}
	\|h_{k}-G_{a}\|_{L^{p}(d\eta_{\lambda})}
	\geq \|h_{k}\|_{L^{p}(d\eta_{\lambda})}-\|G_{a}\|_{L^{p}(d\eta_{\lambda})}
	\geq \frac{1}{2}\|h_{k}\|_{L^{p}(d\eta_{\lambda})}
\end{align*}
for all $k\geq k_{1}$, as desired.

Suppose $p-2\lambda-2>0$. By~\eqref{eq:BH2Gar}, we have $\|G_{a}\|_{L^{p}(d\eta_{\lambda})}\to \infty$ as $a\to\infty$. Since $\|h_{k}\|_{L^{p}(d\eta_{\lambda})}\to \infty$ and $\|G_{a}\|_{L^{p}(d\eta_{\lambda})}$ is bounded in $a\in (0,a_{0}]$ for a fixed $a_{0}$ by~\eqref{eq:BH2Gar}, it suffices to show that there exist $k_{0}$ and $a_{0}$ such that
\begin{align*}
	\|h_{k}-G_{a}\|_{L^{p}(d\eta_{\lambda})}
	\geq \|h_{k}-G_{\pi}\|_{L^{p}(d\eta_{\lambda})}
\end{align*}
for all $k\geq k_{0}$ and $a\geq a_{0}$. Let $\wt{G}_{a}(x)=G_{a}(\frac{x}{2\sqrt{\pi}})/G_{\pi}(\frac{x}{2\sqrt{\pi}})$ then
\begin{align*}
	\|h_{k}-G_{a}\|_{L^{p}(d\eta_{\lambda})}^{p}
	=C(p,\lambda) \int |\sqrt{f_{k}(x)}-\wt{G}_{a}(x)|^{p}\gamma^{\frac{p}{2}}(x)|x|^{\lambda}dx.
\end{align*}
We choose $k_{1}\in\N$ such that $\frac{1}{2}\leq c_{k}\leq  \frac{3}{2}$ for all $k\geq k_{1}$. Let $I=[-x_{0},x_{0}]$ with 
\begin{align*}
	x_{0}=\frac{1}{2}\sqrt{\frac{\log a-\log \pi-4\log (3/2)}{a-\pi}},
\end{align*}
then $\wt{G}_{a}(x)\geq 3/2$ for all $x\in I$. Choose $a_{1}>\pi$ so that $\wt{G}_{a}(1)\leq \frac{1}{2}\leq \sqrt{c_{k}}$ for all $a\geq a_{1}$, then $I\subset [-(k-\frac{1}{2k}),k-\frac{1}{2k}]$. We get
\begin{align*}
	\int_{-(k-\frac{1}{2k})}^{k-\frac{1}{2k}} |\sqrt{f_{k}(x)}-\wt{G}_{a}(x)|^{p}\gamma^{\frac{p}{2}}(x)|x|^{\lambda}dx
	&\geq \int_{I} |\sqrt{f_{k}(x)}-\wt{G}_{a}(x)|^{p}\gamma^{\frac{p}{2}}(x)|x|^{\lambda}dx\\
	&\geq  C(p,\lambda)a^{\frac{p-2(\lambda+1)}{4}}\int_{I'}|x|^{\lambda}d\gamma-C_{1}(p,\lambda),
\end{align*}
where $I'=[-\sqrt{\frac{ap}{2\pi}}x_{0},\sqrt{\frac{ap}{2\pi}}x_{0}]$. Since $\sqrt{a}x_{0}\to \infty$ as $a\to \infty$, there exist $a_{2}$ and $C>0$ such that $\int_{I'}|x|^{\lambda}d\gamma\geq C$ for all $a\geq a_{2}$. Let $B=\{x:\sqrt{f_{k}(x)}\geq 1, |x|\geq k\}$. Since $\wt{G}_{a}(x)\leq 1$ for $x\geq k$, we have
\begin{align*}
	\int_{k}^{\infty}|\sqrt{f_{k}(x)}-\wt{G}_{a}(x)|^{p}\gamma^{\frac{p}{2}}(x)|x|^{\lambda}dx
	&\geq  \int_{B}|\sqrt{f_{k}(x)}-1|^{p}\gamma^{\frac{p}{2}}(x)|x|^{\lambda}dx \\
	&\geq  \int_{k}^{\infty}|\sqrt{f_{k}(x)}-1|^{p}\gamma^{\frac{p}{2}}(x)|x|^{\lambda}dx -C(p,\lambda).
\end{align*}
Combining our observation, we get
\begin{align*}
	&\int |\sqrt{f_{k}(x)}-\wt{G}_{a}(x)|^{p}\gamma^{\frac{p}{2}}(x)|x|^{\lambda}dx\\
	&\qquad\geq 
	\int_{-(k-\frac{1}{2k})}^{k-\frac{1}{2k}}|\sqrt{f_{k}(x)}-\wt{G}_{a}(x)|^{p}\gamma^{\frac{p}{2}}(x)|x|^{\lambda}dx
	+2\int_{k}^{\infty}|\sqrt{f_{k}(x)}-\wt{G}_{a}(x)|^{p}\gamma^{\frac{p}{2}}(x)|x|^{\lambda}dx
	\\
	&\qquad\geq
	C_{1}(p,\lambda)a^{\frac{p-2(\lambda+1)}{4}}
	+2\int_{k}^{\infty}|\sqrt{f_{k}(x)}-1|^{p}\gamma^{\frac{p}{2}}(x)|x|^{\lambda}dx -C_{2}(p,\lambda).
\end{align*}
We choose $k_{2}$ large enough so that for all $k\geq k_{2}$, we have
\begin{align*}
	\int |\sqrt{f_{k}(x)}-1|^{p}\gamma^{\frac{p}{2}}(x)|x|^{\lambda}dx 
	\leq C_{3}(p,\lambda)+2\int_{k}^{\infty}|\sqrt{f_{k}(x)}-1|^{p}\gamma^{\frac{p}{2}}(x)|x|^{\lambda}dx.
\end{align*}
It then follows that 
\begin{multline*}
	\int |\sqrt{f_{k}(x)}-\wt{G}_{a}(x)|^{p}\gamma^{\frac{p}{2}}(x)|x|^{\lambda}dx
	\geq C_{1}(p,\lambda)a^{\frac{p-2(\lambda+1)}{4}}
	+\int |\sqrt{f_{k}(x)}-1|^{p}\gamma^{\frac{p}{2}}(x)|x|^{\lambda}dx \\
	-C_{2}(p,\lambda)-C_{3}(p,\lambda).
\end{multline*}
Letting $a$ large enough, we obtain 
\begin{align*}
	\int|\sqrt{f_{k}(x)}-\wt{G}_{a}(x)|^{p}\gamma^{\frac{p}{2}}(x)|x|^{\lambda}dx
	\geq \int |\sqrt{f_{k}(x)}-1|^{p}\gamma^{\frac{p}{2}}(x)|x|^{\lambda}dx. 
\end{align*}
Therefore we have $\|h_{k}-G_{a}\|_{L^{p}(d\eta_{\lambda})}\geq \|h_{k}-G_{\pi}\|_{L^{p}(d\eta_{\lambda})}$ as desired.
\qed

\begin{remark}\label{rmk:bhi-lp-conv}
For the Lebesgue measure and $p\geq 0$, we have
\begin{align*}
	\|h_{k}-G_{\pi}\|^{p}_{p}
	&= (4\pi)^{\frac{p-2}{4}}\int |\sqrt{f_{k}}-1|^{p}\gamma^{\frac{p}{2}}(x)dx \\
    &= O(r_k^{\frac{p}{2}})+2(c_{k}r_k)^{\frac{p}{2}}\int_{k}^\infty\left|e^{k(x-k)}-1\right|^{p}\gamma^{\frac{p}{2}}(x)dx\\
	&= O(r_k^{\frac{p}{2}}).
\end{align*}
Thus, we get
\begin{align*}
	\lim_{k\to\infty}\dist_{L^{p}(dx)}(h_{k},\fG)
	\leq 	
	\lim_{k\to\infty}\|h_{k}-G_{\pi}\|_{p}=0,
\end{align*}
which implies that our example does not give an instability result for the BHI when $\theta=0$ in Theorem~\ref{thm:ins-bhi-ew} and $\lambda=0$ in Theorem~\ref{thm:ins-bhi-pw}.
\end{remark}

{\bf Acknowlegment.} The author would like to thank Prof.~Emanuel Indrei for suggesting this problem and his helpful advice while writing this paper, and Prof.~Rodrigo Ba\~nuelos for his invaluable help and encouragement. The author gratefully acknowledges the detailed comments and suggestions from anonymous referees, which greatly improved the content and the overall presentation.

\begin{bibdiv}
\begin{biblist}

\bib{Babenko1961a}{article}{
      author={Babenko, K.~I.},
       title={An inequality in the theory of {F}ourier integrals},
        date={1961},
        ISSN={0373-2436},
     journal={Izv. Akad. Nauk SSSR Ser. Mat.},
      volume={25},
       pages={531\ndash 542},
      review={\MR{0138939}},
}

\bib{Barthe2008a}{article}{
      author={Barthe, Franck},
      author={Kolesnikov, Alexander~V.},
       title={Mass transport and variants of the logarithmic {S}obolev
  inequality},
        date={2008},
        ISSN={1050-6926},
     journal={J. Geom. Anal.},
      volume={18},
      number={4},
       pages={921\ndash 979},
      review={\MR{2438906}},
}

\bib{Beckner1975a}{article}{
      author={Beckner, William},
       title={Inequalities in {F}ourier analysis},
        date={1975},
        ISSN={0003-486X},
     journal={Ann. of Math. (2)},
      volume={102},
      number={1},
       pages={159\ndash 182},
      review={\MR{0385456}},
}

\bib{Bobkov2014a}{article}{
      author={Bobkov, S.~G.},
      author={Gozlan, N.},
      author={Roberto, C.},
      author={Samson, P.-M.},
       title={Bounds on the deficit in the logarithmic {S}obolev inequality},
        date={2014},
        ISSN={0022-1236},
     journal={J. Funct. Anal.},
      volume={267},
      number={11},
       pages={4110\ndash 4138},
         url={http://dx.doi.org/10.1016/j.jfa.2014.09.016},
      review={\MR{3269872}},
}

\bib{Bolley2018a}{article}{
      author={Bolley, Fran\c{c}ois},
      author={Gentil, Ivan},
      author={Guillin, Arnaud},
       title={Dimensional improvements of the logarithmic {S}obolev,
  {T}alagrand and {B}rascamp-{L}ieb inequalities},
        date={2018},
        ISSN={0091-1798},
     journal={Ann. Probab.},
      volume={46},
      number={1},
       pages={261\ndash 301},
      review={\MR{3758731}},
}

\bib{Caffarelli2000a}{article}{
      author={Caffarelli, Luis~A.},
       title={Monotonicity properties of optimal transportation and the {FKG}
  and related inequalities},
        date={2000},
        ISSN={0010-3616},
     journal={Comm. Math. Phys.},
      volume={214},
      number={3},
       pages={547\ndash 563},
      review={\MR{1800860}},
}

\bib{Carlen1991a}{article}{
      author={Carlen, Eric~A.},
       title={Superadditivity of {F}isher's information and logarithmic
  {S}obolev inequalities},
        date={1991},
        ISSN={0022-1236},
     journal={J. Funct. Anal.},
      volume={101},
      number={1},
       pages={194\ndash 211},
         url={http://dx.doi.org/10.1016/0022-1236(91)90155-X},
      review={\MR{1132315}},
}

\bib{Christ2014a}{article}{
      author={Christ, Michael},
       title={A sharpened hausdorff-young inequality},
        date={2014},
     journal={arXiv e-print},
      eprint={1406.1210},
}

\bib{Cordero-Erausquin2017a}{article}{
      author={Cordero-Erausquin, Dario},
       title={Transport inequalities for log-concave measures, quantitative
  forms, and applications},
        date={2017},
        ISSN={0008-414X},
     journal={Canad. J. Math.},
      volume={69},
      number={3},
       pages={481\ndash 501},
      review={\MR{3679684}},
}

\bib{Dolbeault2016a}{article}{
      author={Dolbeault, Jean},
      author={Toscani, Giuseppe},
       title={Stability results for logarithmic {S}obolev and
  {G}agliardo-{N}irenberg inequalities},
        date={2016},
        ISSN={1073-7928},
     journal={Int. Math. Res. Not. IMRN},
      number={2},
       pages={473\ndash 498},
         url={http://dx.doi.org/10.1093/imrn/rnv131},
      review={\MR{3493423}},
}

\bib{Eldan2019a}{article}{
      author={Eldan, Ronen},
      author={Lehec, Joseph},
      author={Shenfeld, Yair},
       title={Stability of the logarithmic {S}obolev inequality via the
  {F}\"{o}llmer process},
        date={2020},
        ISSN={0246-0203},
     journal={Ann. Inst. Henri Poincar\'{e} Probab. Stat.},
      volume={56},
      number={3},
       pages={2253\ndash 2269},
      review={\MR{4116725}},
}

\bib{Fathi2016a}{article}{
      author={Fathi, Max},
      author={Indrei, Emanuel},
      author={Ledoux, Michel},
       title={Quantitative logarithmic {S}obolev inequalities and stability
  estimates},
        date={2016},
        ISSN={1078-0947},
     journal={Discrete Contin. Dyn. Syst.},
      volume={36},
      number={12},
       pages={6835\ndash 6853},
         url={http://dx.doi.org/10.3934/dcds.2016097},
      review={\MR{3567822}},
}

\bib{Feo2017a}{article}{
      author={Feo, F.},
      author={Indrei, E.},
      author={Posteraro, M.~R.},
      author={Roberto, C.},
       title={Some {R}emarks on the {S}tability of the {L}og-{S}obolev
  {I}nequality for the {G}aussian {M}easure},
        date={2017},
        ISSN={0926-2601},
     journal={Potential Anal.},
      volume={47},
      number={1},
       pages={37\ndash 52},
         url={http://dx.doi.org/10.1007/s11118-016-9607-5},
      review={\MR{3666798}},
}

\bib{Gozlan}{article}{
      author={Gozlan, Nathaël},
       title={The deficit in the gaussian log-sobolev inequality and inverse
  santalo inequalities},
        date={2021},
     journal={arXiv e-print},
      eprint={2007.05255},
}

\bib{Hirschman1957a}{article}{
      author={Hirschman, I.~I., Jr.},
       title={A note on entropy},
        date={1957},
        ISSN={0002-9327},
     journal={Amer. J. Math.},
      volume={79},
       pages={152\ndash 156},
         url={https://doi-org.ezproxy.lib.purdue.edu/10.2307/2372390},
      review={\MR{0089127}},
}

\bib{Indrei2018a}{article}{
      author={Indrei, Emanuel},
      author={Kim, Daesung},
       title={Deficit estimates for the logarithmic {S}obolev inequality},
        date={2021},
        ISSN={0893-4983},
     journal={Differential Integral Equations},
      volume={34},
      number={7-8},
       pages={437\ndash 466},
      review={\MR{4305006}},
}

\bib{Indrei2014a}{article}{
      author={Indrei, Emanuel},
      author={Marcon, Diego},
       title={A quantitative log-{S}obolev inequality for a two parameter
  family of functions},
        date={2014},
        ISSN={1073-7928},
     journal={Int. Math. Res. Not. IMRN},
      number={20},
       pages={5563\ndash 5580},
      review={\MR{3271181}},
}

\bib{Kolesnikov2013a}{article}{
      author={Kolesnikov, A.~V.},
       title={On {S}obolev regularity of mass transport and transportation
  inequalities},
        date={2013},
        ISSN={0040-585X},
     journal={Theory Probab. Appl.},
      volume={57},
      number={2},
       pages={243\ndash 264},
      review={\MR{3201654}},
}

\bib{LNP}{article}{
      author={Ledoux, Michel},
      author={Nourdin, Ivan},
      author={Peccati, Giovanni},
       title={A {S}tein deficit for the logarithmic {S}obolev inequality},
        date={2017},
        ISSN={1674-7283},
     journal={Sci. China Math.},
      volume={60},
      number={7},
       pages={1163\ndash 1180},
  url={https://doi-org.proxy2.library.illinois.edu/10.1007/s11425-016-0134-7},
      review={\MR{3665794}},
}

\bib{Lieb1990a}{article}{
      author={Lieb, Elliott~H.},
       title={Gaussian kernels have only {G}aussian maximizers},
        date={1990},
        ISSN={0020-9910},
     journal={Invent. Math.},
      volume={102},
      number={1},
       pages={179\ndash 208},
         url={https://doi-org.ezproxy.lib.purdue.edu/10.1007/BF01233426},
      review={\MR{1069246}},
}

\bib{Lieb2001a}{book}{
      author={Lieb, Elliott~H.},
      author={Loss, Michael},
       title={Analysis},
     edition={Second},
      series={Graduate Studies in Mathematics},
   publisher={American Mathematical Society, Providence, RI},
        date={2001},
      volume={14},
        ISBN={0-8218-2783-9},
         url={http://dx.doi.org/10.1090/gsm/014},
      review={\MR{1817225}},
}

\bib{Mikulincer2019a}{article}{
      author={Mikulincer, Dan},
       title={Stability of {T}alagrand's {G}aussian transport-entropy
  inequality via the {F}\"{o}llmer process},
        date={2021},
        ISSN={0021-2172},
     journal={Israel J. Math.},
      volume={242},
      number={1},
       pages={215\ndash 241},
      review={\MR{4282081}},
}

\bib{Otto2000a}{article}{
      author={Otto, F.},
      author={Villani, C.},
       title={Generalization of an inequality by {T}alagrand and links with the
  logarithmic {S}obolev inequality},
        date={2000},
        ISSN={0022-1236},
     journal={J. Funct. Anal.},
      volume={173},
      number={2},
       pages={361\ndash 400},
         url={https://doi-org.ezproxy.lib.purdue.edu/10.1006/jfan.1999.3557},
      review={\MR{1760620}},
}

\bib{Talagrand1996a}{article}{
      author={Talagrand, M.},
       title={Transportation cost for {G}aussian and other product measures},
        date={1996},
        ISSN={1016-443X},
     journal={Geom. Funct. Anal.},
      volume={6},
      number={3},
       pages={587\ndash 600},
         url={https://doi-org.ezproxy.lib.purdue.edu/10.1007/BF02249265},
      review={\MR{1392331}},
}

\end{biblist}
\end{bibdiv}
\end{document}